\documentclass[11pt]{article}

\usepackage{cite}
\usepackage{subfigure}
\usepackage{graphicx, color, graphpap}
\usepackage[normalem]{ulem}
\usepackage{amsmath,amssymb,amsthm}
\usepackage{ifthen}
\usepackage{mathrsfs}
\usepackage{mathtools}
\usepackage{enumitem}
\usepackage{lineno}
\usepackage{float}
\usepackage{fancyhdr}
\usepackage[us,12hr]{datetime} 
\usepackage{exscale}
\usepackage{tabularx}
\usepackage{latexsym}
\usepackage{fullpage}       
\usepackage{verbatim}
\usepackage{multirow}
\usepackage{overpic}
\usepackage{comment} 
\usepackage{hyperref}
\usepackage{blkarray}
\usepackage{adjustbox}
\usepackage{tcolorbox}

\usepackage{pgfplots}
\usepackage{marvosym}
\usepackage[noabbrev]{cleveref}
\usepackage{tikz}

\usepackage{algorithm}
\usepackage{algpseudocode}

\pgfplotsset{compat=1.11}
\usetikzlibrary{arrows, arrows.meta}
\usetikzlibrary{shapes.geometric, arrows.meta, positioning}

\tikzstyle{startstop} = [rectangle, rounded corners, minimum width=3.5cm, minimum height=1cm,text centered, draw=black, fill=blue!10]
\tikzstyle{process} = [rectangle, minimum width=4cm, minimum height=1cm, text centered, draw=black, fill=green!10]
\tikzstyle{io} = [trapezium, trapezium left angle=70, trapezium right angle=110, minimum width=3.5cm, minimum height=1cm, text centered, draw=black, fill=orange!15]
\tikzstyle{arrow} = [thick,->,>=stealth]

\definecolor{methD}{RGB}{230,120,50}
\definecolor{methDD}{RGB}{50,140,90}
\definecolor{methSDD}{RGB}{50,90,170}
\definecolor{methPSD}{RGB}{180,40,40}
\tikzset{
	visbox/.style={rounded corners=8pt, line width=1.2pt, align=center, inner sep=8pt},
	visnestouter/.style={visbox, minimum width=9.6cm, minimum height=4.4cm},
	visnest/.style={visbox, minimum width=7.6cm, minimum height=3.25cm},
	visnestmid/.style={visbox, minimum width=5.7cm, minimum height=2.2cm},
	visnestinn/.style={visbox, minimum width=3.4cm, minimum height=1.05cm,
		inner sep=3pt, font=\scriptsize\bfseries, align=center, text=methD},
	visnestpos/.style={anchor=south east},
	visnestinset/.style={xshift=-5pt, yshift=5pt},
}

\newcommand{\rank}{\text{rank}}

\newcommand{\Diag}{\text{Diag}}
\newcommand{\diag}{\text{diag}}

\DeclareMathOperator{\range}{range}
\DeclareMathOperator{\spanop}{span}
\newcommand{\Spsd}[1]{\mathbb{S}^{#1}_+}

\DeclareMathOperator{\msd}{MSD}
\DeclareMathOperator{\sd}{sd}

\newcommand{\R}{\mathbb{R}}

\newcommand{\C}{\mathcal{C}}

\newtheorem{prop}{Proposition}
\newtheorem{lem}{Lemma}[section]
\newtheorem{thm}{Theorem}[section]
\newtheorem{cor}{Corollary}[section]

\newtheorem{ex}{Example}[section]

\crefname{thm}{Theorem}{Theorems}
\Crefname{thm}{Theorem}{Theorems}
\crefname{problem}{Problem}{Theorems}
\Crefname{problem}{Problem}{Theorems}
\Crefname{assump}{Assumption}{Theorems}
\crefname{assump}{Assumption}{Theorems}
\crefname{assumption}{Assumption}{Assumptions}
\Crefname{assumption}{Assumption}{Assumptions}
\crefname{conjecture}{Conjecture}{Theorems}
\Crefname{conjecture}{Conjecture}{Theorems}
\crefname{prop}{Proposition}{Propositions}
\Crefname{prop}{Proposition}{Propositions}
\crefname{cor}{Corollary}{Corollaries}
\Crefname{cor}{Corollary}{Corollaries}
\crefname{lem}{Lemma}{Lemmas}
\Crefname{lem}{Lemma}{Lemmas}
\theoremstyle{definition}
\crefname{conj}{Conjecture}{Conjectures}
\Crefname{conj}{Conjecture}{Conjectures}
\crefname{remark}{Remark}{Remarks}
\Crefname{remark}{Remark}{Remarks}
\crefname{rmk}{Remark}{Remarks}
\Crefname{rmk}{Remark}{Remarks}
\crefname{example}{Example}{Examples}
\Crefname{example}{Example}{Examples}
\crefname{align}{}{}
\Crefname{align}{}{}
\crefname{equation}{}{}
\Crefname{equation}{}{}

\def\eqref#1{{\normalfont(\ref{#1})}}

\usepackage{authblk}

\author[1]{Hao Hu\footnote{School of Mathematical and Statistical Sciences, Clemson University, Clemson, USA; Email: \url{hhu2@clemson.edu}; Research supported by the Air Force Office of Scientific Research under award number FA9550-23-1-0508.}}

\begin{document}

\title{Sharp
Singularity-Degree Bounds for Equality-Generated SDP--RLT Relaxations of Binary Programs}

\break
\date{}
\maketitle

\medskip

\begin{abstract}
    Singularity degree is an important measure of semidefinite programming (SDP) degeneracy, but it is generally unavailable a priori from the problem data. We augment the Shor relaxation of nonempty binary sets \(\{x\in\{0,1\}^n:Ax=b\}\) with the first-level Reformulation--Linearization Technique (RLT) equations generated by the defining linear equalities. For the resulting equality-generated SDP--RLT relaxation, we determine the exact worst-case singularity degree. If \(\operatorname{rank}(A)=m\) and \(0<m<n\), then the associated relaxation has singularity degree at most \(\min\{m,n-m\}\), and this rank--nullity bound is attained for every possible rank in this range. Consequently, the worst-case singularity degree over this class is \(\lfloor n/2\rfloor\) for \(n\geq2\). This is strikingly smaller than the sharp general bound \(n\) for feasible SDP systems with matrix variables of order \(n+1\) \cite[Example~2]{sturm2000error}. Thus, for individual relaxations, rank and nullity provide an a priori bound on the otherwise inaccessible singularity degree and on the H\"older exponent in error bounds estimating distance to feasibility from constraint residuals.
\end{abstract}

{\bf Key Words:}
semidefinite programming, binary optimization, facial reduction, singularity
degree, Shor relaxation, Reformulation--Linearization Technique

\section{Introduction}

The singularity degree of a semidefinite program quantifies its failure of
Slater's condition: it is the minimum number of facial-reduction
steps needed to reach the minimal face containing the feasible set.  Although
it governs the worst-case H\"older behavior of error bounds and is closely
connected with the numerical sensitivity of degenerate SDPs
\cite{sturm2000error,sremac2021error}, it is generally an a posteriori
parameter.  Determining it generally requires analyzing the facial-reduction
certificates leading to the minimal face, and no tractable procedure is known
for general spectrahedra
\cite{sremac2017complete}.  The maximum singularity degree is the largest
possible length of a facial-reduction sequence, and computing it is NP-hard
for general SDPs \cite{hu2024maximum}.  Singularity
degree is therefore used primarily to explain pathological behavior, rather
than as a quantity that is readily computable from the problem data.

This makes sharp a priori bounds for important structured classes especially
valuable: even when the singularity degree of a particular instance is unknown,
a uniform bound for the class determines in advance its largest possible
value.  Through existing error-bound theory, such a bound supplies an a priori
bound on the singularity-degree exponent governing the H\"older relationship
between constraint residuals and distance to feasibility.  An important
structured class consists of equality-generated SDP relaxations based on the
Reformulation--Linearization Technique (RLT)
\cite{sherali1990hierarchy,sherali2013reformulation}.
These relaxations offer a practical modeling choice when the Shor relaxation
provides insufficient bounds but imposing the full collection of McCormick
inequalities is too costly.
We therefore seek a sharp upper bound on the singularity degree of these
relaxations.

To formalize this question, let
\[
    P(A,b):=\{x\in\{0,1\}^n\mid Ax=b\}
\]
be the binary set defined by the linear equalities \(Ax=b\).  We define its
equality-generated SDP--RLT relaxation by
\[
\mathcal R(A,b)
:=
\left\{
Y=\begin{pmatrix}1&x^T\\x&X\end{pmatrix}\succeq0
\ \middle|\
\diag(X)=x,\ Ax=b,\ AX=bx^T
\right\}.
\]
For a fixed number \(n\) of binary variables, we study the extremal problem
\begin{equation}\label{eq:intro-extremal-problem}
    \max_{A,b:\,P(A,b)\neq\emptyset}
    \sd\bigl(\mathcal R(A,b)\bigr).
\end{equation}
We call the value in \eqref{eq:intro-extremal-problem} the
\emph{worst-case singularity degree} of the equality-generated SDP--RLT class
in \(n\) binary variables.
The maximization ranges over affine descriptions \((A,b)\) satisfying
\(P(A,b)\neq\emptyset\), with each description inducing the relaxation
\(\mathcal R(A,b)\).  Different
descriptions of the same binary set may induce relaxations with different
singularity degrees.

Our main result gives the exact answer: the value in
\eqref{eq:intro-extremal-problem} is
\[
    \begin{cases}
        1,&n=1,\\[1mm]
        \lfloor n/2\rfloor,&n\geq2.
    \end{cases}
\]
The result has a sharper rank--nullity form.  Let \(m=\rank(A)\).  For
\(0<m<n\),
\[
    \sd\bigl(\mathcal R(A,b)\bigr)\leq \min\{m,n-m\},
\]
and this bound is attained for every possible rank \(m\) in this range.
Therefore, \(\min\{m,n-m\}\) is the best possible uniform bound based only
on rank and nullity.  Maximizing it over \(m\) gives
\(\lfloor n/2\rfloor\), which occurs when rank and nullity are as nearly
balanced as possible.  For \(n\geq2\), this worst-case value can be attained by
a homogeneous system with \(P(A,0)=\{0\}\).  Thus even a singleton binary set
can yield a lifted relaxation with singularity degree \(\lfloor n/2\rfloor\).

The proof homogenizes the affine equations and uses a nullspace
parametrization to obtain a reduced formulation whose matrix variable has
order \(r+1\), where \(r:=n-m\) is the nullity of \(A\).  For this formulation,
we prove the stronger statement that its maximum singularity degree is at most
\(\min\{m,r\}\).  Consequently, every facial-reduction sequence of the
reduced formulation has length at most \(\min\{m,r\}\), independently of the
choice of exposing matrices.  This stronger bound for the reduced formulation
is the key mechanism behind the singularity-degree bound for the original
relaxation \(\mathcal R(A,b)\).

\paragraph{Relation to prior work.}
Facial reduction was introduced by Borwein and Wolkowicz
\cite{borwein1981regularizing,borwein1981facial}, and Sturm introduced
singularity degree and established its connection with error bounds
\cite{sturm2000error}.  Later developments include Pataki's treatment of
facial reduction and extended duality \cite{pataki2013strong}, as well as the
iteration bounds of Louren\c{c}o, Muramatsu, and Tsuchiya based on partial
polyhedrality \cite{lourencco2018facial}.  Facial reduction for structured
conic and polynomial optimization problems is studied in
\cite{waki2013facial,drusvyatskiy2017many,sremac2021error}.  For SDP
relaxations of nonconvex sets, Tun\c{c}el relates the existence of Slater points
to the dimension and affine hull of the original set
\cite{tuncel2001slater}.

Several results derive singularity-degree bounds from problem structure.  For
positive semidefinite matrix completion, graph structure controls the degree
\cite{tanigawa2017singularity}.  For SDP relaxations of binary sets defined by
linear equalities, the basic Shor relaxation has singularity degree at most one
\cite{hu2026shorsingularity}.  By contrast, general order-\((n+1)\) SDP systems
can attain singularity degree \(n\) \cite[Example~2]{sturm2000error}.  Our
theorem gives an analogous structure-dependent bound for equality-generated
SDP--RLT relaxations and places their worst-case singularity degree between
these two benchmarks.

The relaxations studied here augment the Shor relaxation with the standard
first-level RLT equations generated by linear equalities
\cite{sherali1990hierarchy,sherali2013reformulation}.
Recent work has also used facial reduction to regularize
SDP--RLT relaxations with linear equality constraints
\cite{yildirim2026relaxations} and has studied \(AX=0\) facial constraints
arising from equality-generated RLT and moment--SOS relaxations
\cite{hou2026lowrankalm}.  These works do not determine the singularity degree
of the resulting formulations; the present paper establishes its sharp
class-wide rank--nullity bound in the binary setting.
Sturm's example, by contrast, does not arise from an SDP relaxation of a
binary program and therefore does not resolve the present question.
The sharpness construction developed in this paper yields a
family of equality-generated first-level SDP--RLT relaxations of linearly
constrained binary sets whose singularity degree grows linearly with the number
of variables.

\paragraph{Organization.}
\Cref{sec_prel} introduces notation and the facial-reduction terminology used
throughout.  \Cref{sec:general_equality_generated} defines
the equality-generated SDP--RLT relaxation, derives its reduced formulation,
and relates the facial-reduction sequences of the two systems.
\Cref{sec:binary_high_singularity_degree} develops the Vandermonde--Hankel
construction that attains the worst-case value.
\Cref{sec:structured_upper_bound} proves the sharp rank--nullity upper
bound, resolves \eqref{eq:intro-extremal-problem}, and establishes attainment
for every rank--nullity pair.  The appendix proves a linear-span identity used
to compare the original and reduced formulations.

\section{SDP relaxations of binary programs and facial reduction}
\label{sec_prel}

\subsection{Notation}

For a positive integer \(p\), let \(\mathbb R^p\) be Euclidean space,
\(\mathbb S^p\) the space of real symmetric \(p\times p\) matrices, and
\(\mathbb S^p_+\) and \(\mathbb S^p_{++}\) the positive semidefinite and
positive definite cones, respectively.  We use
\(\langle X,Y\rangle=\operatorname{tr}(XY)\) on symmetric matrix spaces.
We label the coordinates of \(\mathbb R^p\) by \(1,\ldots,p\) and those of its
homogenization \(\mathbb R^{p+1}\) by \(0,1,\ldots,p\).  When the ambient space
is clear, \(e_i\) denotes the standard unit vector associated with coordinate
\(i\); in the homogenized space, \(e_0\) corresponds to the homogenizing
coordinate, while \(e_1,\ldots,e_p\) correspond to the original coordinates.
We use the symmetric basis
\begin{equation}\label{eq:symmetric_basis_matrices}
E_{ij}:=\frac12(e_ie_j^T+e_je_i^T)\quad(i\neq j),
\qquad
E_{ii}:=e_ie_i^T .
\end{equation}
For a square matrix \(X\), \(\diag(X)\) is its diagonal vector.  For a matrix
or linear map \(T\), \(\range(T)\) and \(\ker(T)\) denote its range and
kernel, and \(T^*\) denotes the adjoint.  For a cone \(K\) in an inner-product
space, \(K^*:=\{z\mid \langle z,x\rangle\geq0\text{ for every }x\in K\}\)
denotes its dual cone.  For a set \(S\) in an inner-product space,
\[
S^\perp:=\{z\mid \langle z,s\rangle=0\text{ for every }s\in S\}
       =\operatorname{span}(S)^\perp .
\]

\subsection{Facial reduction and singularity degree}
\label{subsec:facial_reduction_algorithm}

Let \(L\) be an affine subspace with
\(L\cap\mathbb S^p_+\neq\emptyset\).  Slater's condition is
\(L\cap\mathbb S^p_{++}\neq\emptyset\).  If it fails, a theorem of the
alternative yields a nonzero matrix
\(W\in L^\perp\cap\mathbb S^p_+\).  Its
orthogonal hyperplane exposes a proper face of the current PSD cone that still
contains \(L\cap\mathbb S^p_+\)
\cite{borwein1981regularizing,borwein1981facial}.

Every face of \(\mathbb S^p_+\) can be written as
\[
F
=
\{Y\succeq0\mid \range(Y)\subseteq\mathcal V\}
=
\{VRV^T\mid R\in\mathbb S^r_+\},
\]
where the columns of \(V\in\mathbb R^{p\times r}\) span \(\mathcal V\).
Let \(F\) be a current face containing
\(L\cap\mathbb S^p_+\).  A matrix
\(W\in L^\perp\cap(F^*\setminus F^\perp)\) is an exposing matrix for a
facial-reduction step on \(F\).
The condition \(W\in L^\perp\) ensures that the new face
still contains the feasible set.  Moreover, \(W\in F^*\) gives
\(V^TWV\succeq0\), while \(W\notin F^\perp\) makes this matrix nonzero.
Hence \(F\cap W^\perp\) is a proper face of \(F\).  A
\emph{facial-reduction sequence of length \(d\)}
is constructed as follows.  Set
\(F_0:=\mathbb S^p_+\).  For \(j=1,\ldots,d\), the \(j\)th facial-reduction
step selects an exposing matrix
\[
W_j\in L^\perp\cap(F_{j-1}^*\setminus F_{j-1}^\perp)
\]
and set \(F_j:=F_{j-1}\cap W_j^\perp\).  The terminal condition is
\(F_d=F_{\min}\), where \(F_{\min}\) is the minimal face containing the
feasible set.  Consequently,
\[
\mathbb S^p_+=F_0\supsetneq F_1\supsetneq\cdots
\supsetneq F_d=F_{\min}.
\]
Thus every step properly reduces the current face.  A
\emph{partial facial-reduction sequence}
satisfies the same step conditions but is not required to satisfy the terminal
condition \(F_d=F_{\min}\).

More generally, if \(F\) is a face of \(\mathbb S^p_+\) and
\(L\cap F\neq\emptyset\), a facial-reduction sequence for \(L\cap F\) is
defined by taking \(F_0:=F\) and applying the same step condition.  Its terminal
face is the minimal face of \(F\) containing \(L\cap F\).

The minimum possible length \(d\) of a facial-reduction sequence is the
singularity degree, denoted by
\(\sd(L\cap\mathbb S^p_+)\).
The maximum possible length of a facial-reduction sequence is the
\emph{maximum singularity degree}, denoted by
\(\msd(L\cap\mathbb S^p_+)\) \cite[Section~2.1]{hu2024maximum}.  Therefore,
\begin{equation}\label{eq:sd-le-msd}
    \sd(L\cap\mathbb S^p_+)
    \leq
    \msd(L\cap\mathbb S^p_+).
\end{equation}

\section{Equality-generated SDP--RLT relaxations and facial-reduction tools}
\label{sec:general_equality_generated}

\subsection{Definition and omitted redundant equations}
\label{sec:equality-generated-rlt-relaxation}

Let \(A\in\R^{q\times n}\) and let \(b\in\R^q\).  Recall
that the associated linearly constrained binary set is
\begin{equation}\label{eq:PAb}
    P:=P(A,b) = \{x\in\{0,1\}^n\mid Ax=b\}.
\end{equation}
Its equality-generated SDP--RLT relaxation is
\begin{equation}\label{eq:RAb}
\mathcal R(A,b)
:=
\left\{
Y=\begin{pmatrix}1&x^T\\x&X\end{pmatrix}\in\Spsd{n+1}
\ \middle|\
\diag(X)=x,\quad
\begin{pmatrix}-b&A\end{pmatrix}Y=0
\right\}.
\end{equation}
We index the rows and columns of \(Y\) by \(0,1,\ldots,n\), where index
\(0\) corresponds to the constant monomial.  Thus, \(Y_{00}=1\) is the
normalization equation, and \(\diag(X)=x\) is equivalently the family of
binary arrow equations \(Y_{ii}=Y_{0i}\), \(i=1,\ldots,n\).
The matrix equation in \eqref{eq:RAb} is equivalent to \(Ax=b\) and
\(AX=bx^T\).  These equations have the standard degree-one RLT interpretation.
Indeed, for every row
\(a_i^T\) of \(A\) and every \(j=1,\ldots,n\), multiplying
\(a_i^Tx=b_i\) by \(x_j\) and linearizing gives
\[
    (AX)_{ij}=b_i x_j.
\]
Throughout the remainder of the paper, we assume that \(P\neq\emptyset\).
Every \(x\in P\) yields the feasible rank-one matrix
\((1,x^T)^T(1,x^T)\in\mathcal R(A,b)\), so \(\mathcal R(A,b)\neq\emptyset\).
The same assumption supplies the binary feasible point used in the reduction
to homogeneous equations in the next subsection.

The following operations leave both the SDP relaxation and its singularity and
maximum singularity degrees unchanged because they do not change the span of
the homogeneous constraint matrices:
\begin{itemize}[leftmargin=2em]
\item
including the RLT equations generated using \(1-x_j\);
\item
including the linearized products between pairs of rows of \(Ax=b\);
\item
removing redundant rows of \(Ax=b\).
\end{itemize}
We therefore retain none of these redundant equations and assume that
\(A\in\R^{m\times n}\) has full row rank, where \(m=\rank(A)\).  By contrast,
the McCormick inequalities in the full first-level SDP--RLT relaxation
generally strengthen the relaxation and, when represented using nonnegative
slacks, produce a conic formulation over the product of the positive
semidefinite cone and a nonnegative orthant.  Their singularity degree requires
a separate analysis.

\subsection{Reduction of affine equations to homogeneous form}

We next apply the standard \emph{switching operation}, also called
binary-variable complementation, to move a binary feasible point to the
origin without leaving the binary cube; see, e.g., \cite{ziegler2000zeroone}.
Fix \(\overline x\in P\).  We switch precisely the variables for which
\(\overline x_i=1\): if \(\overline x_i=0\), set
\(w_i=x_i\), whereas if \(\overline x_i=1\), set
\(w_i=1-x_i\).  Equivalently, set
\[
    D:=\Diag(\mathbf1-2\overline x),\qquad
    w=D(x-\overline x),\qquad
    x=\overline x+Dw.
\]
Thus \(x\in\{0,1\}^n\) if and only if \(w\in\{0,1\}^n\), and
\(\overline x\) is mapped to \(w=0\).  Define
\[
    \widehat A:=AD,
    \qquad
    \widehat P:=\{w\in\{0,1\}^n\mid \widehat Aw=0\}.
\]
Since \(A\overline x=b\),
\[
    Ax=b
    \quad\Longleftrightarrow\quad
    \widehat Aw=0.
\]
Hence the change of variables maps \(P\) bijectively onto \(\widehat P\),
whose equality-generated SDP--RLT relaxation is
\(\mathcal R(\widehat A,0)\).
The next lemma shows that this change of variables extends to an automorphism
of the PSD cone that maps the affine subspace defined by the equality
constraints of \(\mathcal R(\widehat A,0)\) onto that defined by the equality
constraints of \(\mathcal R(A,b)\).
Consequently, it preserves facial-reduction sequences and singularity degree.

\begin{lem}\label{lem:affine-to-homogeneous}
With the notation above,
\[
    \sd(\mathcal R(A,b))=\sd(\mathcal R(\widehat A,0)).
\]
\end{lem}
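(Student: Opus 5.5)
We will realize the switching operation as a congruence of the PSD cone. Define the invertible matrix
\[
    M:=\begin{pmatrix}1&0\\ \overline x& D\end{pmatrix}\in\mathbb R^{(n+1)\times(n+1)},
\]
so that \((1,x^T)^T=M(1,w^T)^T\) whenever \(x=\overline x+Dw\). The map \(\Phi(Z):=MZM^T\) is a linear automorphism of \(\mathbb S^{n+1}\) that maps \(\mathbb S^{n+1}_+\) onto itself, and it sends faces to faces. Its inverse is given by \(M^{-1}=\begin{pmatrix}1&0\\-D\overline x&D\end{pmatrix}\), using \(D^2=I\). The plan is to show that \(\Phi\) maps the affine subspace \(\widehat L\) of \(\mathcal R(\widehat A,0)\) onto the affine subspace \(L\) of \(\mathcal R(A,b)\). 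A congruence preserving both the cone and the affine constraints transports facial-reduction sequences bijectively. Indeed, if \(W\in \widehat L^\perp\cap(F^*\setminus F^\perp)\), then \(M^{-T}WM^{-1}\) lies in \(L^\perp\cap(\Phi(F)^*\setminus\Phi(F)^\perp)\), because \(\langle M^{-T}WM^{-1},MZM^T\rangle=\langle W,Z\rangle\). This also uses that \(\Phi(F)\cap (M^{-T}WM^{-1})^\perp=\Phi(F\cap W^\perp)\) and that \(\Phi\) maps the minimal face of \(\widehat L\cap\mathbb S^{n+1}_+\) to that of \(L\cap\mathbb S^{n+1}_+\). Equal minimal lengths give equal singularity degrees, and the same argument gives equal maximum singularity degrees.

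The main step is checking \(\Phi(\widehat L)=L\). I would first verify the homogeneous part, namely that \(\Phi\) maps the linear span of the homogeneous constraint functionals correctly; it is cleanest to work with the constraints directly. Write \(Z=\begin{pmatrix}1&w^T\\ w&Wm\end{pmatrix}\), where \(Wm\) is the lower block and the \(00\) entry is treated as a constraint. Then \(Y=MZM^T\) has \(Y_{00}=Z_{00}\), \(x=Z_{00}\overline x+Dw\), and \(X=Z_{00}\overline x\,\overline x^T+\overline x w^TD+Dw\overline x^T+DWmD\).

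\emph{Matrix equation.} We have \(\begin{pmatrix}-b&A\end{pmatrix}M=\begin{pmatrix}-b+A\overline x& AD\end{pmatrix}=\begin{pmatrix}0&\widehat A\end{pmatrix}\), since \(A\overline x=b\). Hence \(\begin{pmatrix}-b&A\end{pmatrix}Y=\begin{pmatrix}0&\widehat A\end{pmatrix}ZM^T\), which vanishes if and only if \(\begin{pmatrix}0&\widehat A\end{pmatrix}Z=0\), because \(M\) is invertible.

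\emph{Arrow equations.} These require a coordinatewise check. If \(\overline x_i=0\), then \(D_{ii}=1\) and \(Y_{ii}=Wm_{ii}\), \(Y_{0i}=w_i\), so the equations match directly. If \(\overline x_i=1\), then \(D_{ii}=-1\), \(Y_{ii}=Z_{00}-2w_i+Wm_{ii}\), and \(Y_{0i}=Z_{00}-w_i\). The equation \(Y_{ii}=Y_{0i}\) therefore becomes \(Wm_{ii}=w_i\). Together with \(Z_{00}=1\iff Y_{00}=1\), this gives \(\Phi(\widehat L)=L\).

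The only delicate point, and the one I expect to need the most care, is the transfer of exposing matrices. Specifically, one must confirm that the conditions \(W\in F^*\setminus F^\perp\) and the terminal condition are invariant under \(\Phi\). This follows from the adjoint identity above together with \(\Phi(F^*)=\ldots\), i.e., \((\Phi(F))^*=M^{-T}F^*M^{-1}\), a routine duality computation.
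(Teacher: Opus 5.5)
Your proposal is correct and follows essentially the same route as the paper: the same congruence \(Y=Q\widehat YQ^T\) with \(Q=\begin{pmatrix}1&0\\ \overline x&D\end{pmatrix}\), the same identity \(\begin{pmatrix}-b&A\end{pmatrix}Q=\begin{pmatrix}0&\widehat A\end{pmatrix}\) for the RLT equations, and an entrywise check of the normalization and arrow equations that is the primal form of the paper's identities \(Q^TE_{00}Q=E_{00}\) and \(Q^T(E_{ii}-E_{0i})Q=E_{ii}-E_{0i}\). The only difference is that you explicitly spell out the transport of exposing matrices \(W\mapsto Q^{-T}WQ^{-1}\) and of faces under this cone automorphism, which the paper simply cites as standard invariance of facial reduction.
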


\begin{proof}
Set
\[
    Q:=
    \begin{pmatrix}
        1&0\\
        \overline x&D
    \end{pmatrix},
    \qquad
    \Psi(\widehat Y):=Q\widehat YQ^T.
\]
The diagonal entries of \(D\) belong to \(\{-1,1\}\), so \(Q\) is
invertible.  Hence \(\Psi\) is an invertible linear map.

Let \(\widehat L\) and \(L\) denote the affine subspaces defined by the equality
constraints of \(\mathcal R(\widehat A,0)\) and \(\mathcal R(A,b)\),
respectively.  We show that \(\Psi(\widehat L)=L\).  For
\(Y:=\Psi(\widehat Y)\) and every \(H\in\mathbb S^{n+1}\),
\begin{equation}\label{eq:switching-adjoint-identity}
    \langle H,Y\rangle
    =\langle Q^THQ,\widehat Y\rangle.
\end{equation}
Thus the normalization and arrow equations can be compared by transforming
their coefficient matrices by \(H\mapsto Q^THQ\).  Because
\(\overline x_i\in\{0,1\}\) and
\(D_{ii}=1-2\overline x_i\), direct calculation gives
\[
    Q^TE_{00}Q=E_{00},
    \qquad
    Q^T(E_{ii}-E_{0i})Q=E_{ii}-E_{0i}
    \quad (i=1,\ldots,n).
\]
Thus the normalization equation and the binary arrow equations, including
their right-hand sides, correspond exactly under \(\Psi\).

Let \(B:=[-b\ \ A]\) and \(\widehat B:=[0\ \ \widehat A]\).  Then
\[
    BQ
    =
    \begin{pmatrix}-b+A\overline x&AD\end{pmatrix}
    =
    \widehat B.
\]
Consequently, \(BY=BQ\widehat YQ^T=\widehat B\widehat YQ^T\).  Since
\(Q^T\) is invertible, \(BY=0\) if and only if
\(\widehat B\widehat Y=0\).
Together with the identities for the normalization and arrow equations, this
proves \(\Psi(\widehat L)=L\).

Since \(\Psi\) maps the PSD cone onto itself and \(\Psi(\widehat L)=L\), the
standard invariance of facial reduction under cone automorphisms gives a
length-preserving bijection between the facial-reduction sequences of the two
formulations.  They therefore have equal singularity degree.
\end{proof}

The proof also shows that
\(\mathcal R(A,b)=\Psi(\mathcal R(\widehat A,0))\).  Hence every affine system
with a binary feasible point can be reduced to a homogeneous system by
complementing binary variables.  We therefore work below with \(b=0\) and
reuse \(A\) for the resulting homogeneous constraint matrix.

\subsection{Reduction to the facially reduced formulation}

This subsection shows that one facial-reduction step for \(\mathcal R(A,0)\)
exposes the face forced by \(Ax=0\) and \(AX=0\).  Parametrizing this face
produces the smaller system \(\mathcal T(C)\), whose constraints are precisely
the remaining normalization and arrow equations.  We first establish the
exact relation between the two formulations and then compare their
singularity degrees.  This explains why the subsequent analysis may focus on
\(\mathcal T(C)\).

Assume in this subsection that \(m\geq1\), so \(A\neq0\).  If \(m=0\), there
are no linear equations and \(\mathcal R(0,0)\) is strictly feasible, so its
singularity degree is zero.  Indeed, take \(x=\frac12\mathbf1\) and
\(X=\frac14\mathbf1\mathbf1^T+\frac14I\).  Then \(\diag(X)=x\), and the
Schur complement is \(X-xx^T=\frac14I\succ0\).

Let \(A\in\R^{m\times n}\) have full row rank and set
\[
    r:=n-m.
\]
Let \(C\in\R^{n\times r}\) have full column rank with
\[
    \ker A=\range(C).
\]
Thus \(m=\rank A\) is the number of independent equations in \(Ax=0\),
\(r=\dim\ker A\) is the reduced dimension, and \(n=m+r\).  We abbreviate
\begin{equation}\label{eq:RA}
\mathcal R(A):=\mathcal R(A,0)
=
\left\{
\begin{pmatrix}1&x^T\\x&X\end{pmatrix}\in\Spsd{n+1}
\ \middle|\
\diag(X)=x,\quad Ax=0,\quad AX=0
\right\}.
\end{equation}
We introduce the following notation for an arbitrary matrix \(H\).
For any integer \(q\geq0\) and any \(h\in\R^q\), define
\[
\Phi(h)
:=
\begin{pmatrix}
0&-\frac12h^T\\[1mm]
-\frac12h&hh^T
\end{pmatrix}
\]
and, for any matrix \(H\in\R^{n\times q}\) with rows \(h_i^T\), set
\begin{equation}\label{eq:T-general-system}
\mathcal T(H)
:=
\left\{
Z\in\Spsd{q+1}
\ \middle|\
Z_{00}=1,\quad
\langle\Phi(h_i),Z\rangle=0,\quad i=1,\ldots,n
\right\}.
\end{equation}
Writing the rows of \(C\) as \(c_i^T\), the choice \(H=C\) defines the
candidate reduced system \(\mathcal T(C)\).  To identify it with the
formulation obtained after the facial-reduction step constructed below, set
\begin{equation}\label{eq:equality-face-matrices}
    V:=\begin{pmatrix}1&0\\0&C\end{pmatrix},
    \qquad
    \widetilde A:=\begin{pmatrix}0&A\end{pmatrix},
    \qquad
    W:=\widetilde A^T\widetilde A
       =\begin{pmatrix}0&0\\0&A^TA\end{pmatrix}.
\end{equation}

\begin{lem}\label{lem:equality-face-reduction}
The matrix \(W\) is an exposing matrix for a first facial-reduction step of
\(\mathcal R(A)\).  The face exposed by this step is
\[
    F_A
    :=\Spsd{n+1}\cap W^\perp
    =\{VZV^T:Z\in\Spsd{r+1}\}.
\]
Under the parametrization \(Z\mapsto VZV^T\), restricting the defining SDP
system to \(F_A\) yields \(\mathcal T(C)\).
In particular,
\begin{equation}\label{eq:RA-reduced-identity}
    \mathcal R(A)
    =
    \{VZV^T:Z\in\mathcal T(C)\}.
\end{equation}
\end{lem}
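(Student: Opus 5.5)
I will verify the three claims in order: that \(W\) is a legitimate exposing matrix, that the exposed face is \(\{VZV^T\}\), and that the remaining constraints pull back to \(\mathcal T(C)\).

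\textbf{Step 1: \(W\) is an exposing matrix.} First, \(W\in L^\perp\), where \(L\) is the affine subspace of \eqref{eq:RA}. For any \(Y\in\mathbb S^{n+1}\),
\[
\langle W,Y\rangle=\operatorname{tr}(\widetilde A Y\widetilde A^T).
\]
The constraints \(Ax=0\) and \(AX=0\) say exactly that \(\widetilde A Y=0\). Hence \(\langle W,Y\rangle=0\) on \(L\), so \(W\) is orthogonal to \(L-L\). The right-hand side is also zero, so \(W\in L^\perp\) in the sense needed: every feasible \(Y\) satisfies \(\langle W,Y\rangle=0\). Next, \(W\succeq0\), so \(W\in(\Spsd{n+1})^*\). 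Finally, \(W\neq0\) because \(A\neq0\) when \(m\ge1\), so \(W\notin(\Spsd{n+1})^\perp=\{0\}\). Thus \(W\) qualifies as a first facial-reduction step on \(F_0=\Spsd{n+1}\).

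\textbf{Step 2: the exposed face.} For \(Y\succeq0\), we have \(\langle W,Y\rangle=\|\widetilde A Y^{1/2}\|_F^2\). This vanishes iff \(\widetilde A Y=0\), i.e.\ iff \(\range(Y)\subseteq\ker\widetilde A\). Since \(\ker\widetilde A=\{(t,u):Au=0\}=\range(V)\) and \(V\) has full column rank \(r+1\), the face description in Section~\ref{subsec:facial_reduction_algorithm} gives \(F_A=\{VZV^T:Z\in\Spsd{r+1}\}\). The map \(Z\mapsto VZV^T\) is injective because \(V\) has full column rank.

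\textbf{Step 3: pulling back the remaining constraints.} On \(F_A\), the constraints \(\widetilde AY=0\) hold automatically, so only the normalization and arrow equations remain. Their pullbacks are computed with the rule \(\langle H,VZV^T\rangle=\langle V^THV,Z\rangle\).
\begin{itemize}[leftmargin=2em]
\item Normalization: \(V^TE_{00}V=E_{00}\), so \(Y_{00}=1\) becomes \(Z_{00}=1\).
\item Arrow equation \(Y_{ii}-Y_{0i}=0\): it is \(\langle E_{ii}-E_{0i},Y\rangle=0\). We have \(V^TE_{ii}V=\begin{pmatrix}0&0\\0&c_ic_i^T\end{pmatrix}\), since \(Ce_i\)-row extraction gives \(e_i^TC=c_i^T\). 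Also \(V^TE_{0i}V=\frac12\begin{pmatrix}0&c_i^T\\c_i&0\end{pmatrix}\). Hence \(V^T(E_{ii}-E_{0i})V=\Phi(c_i)\).
\end{itemize}
Therefore \(VZV^T\in\mathcal R(A)\) iff \(Z\succeq0\), \(Z_{00}=1\), and \(\langle\Phi(c_i),Z\rangle=0\) for all \(i\); that is, iff \(Z\in\mathcal T(C)\). Combined with \(\mathcal R(A)\subseteq F_A\), which holds because \(W\in L^\perp\), this yields \eqref{eq:RA-reduced-identity}.

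The main point needing care is the orientation of the exposing condition. Because the constraint \(\widetilde AY=0\) is homogeneous, \(W\) is orthogonal to the feasible set itself and not merely to \(L-L\). This is what guarantees that \(F_A\) contains \(\mathcal R(A)\). Everything else is direct computation.
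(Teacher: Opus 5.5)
Your proposal is correct and follows essentially the same route as the paper's proof: verify that \(W=\widetilde A^T\widetilde A\) is a nonzero PSD matrix orthogonal to \(L\), identify the exposed face via \(\ker\widetilde A=\range(V)\), and pull back the normalization and arrow matrices to get \(E_{00}\) and \(\Phi(c_i)\). The differences are cosmetic: you check \(W\in L^\perp\) by computing \(\operatorname{tr}(\widetilde AY\widetilde A^T)=0\) on \(L\) rather than noting that \(W\) lies in the span of the RLT constraint matrices, and you use \(\|\widetilde AY^{1/2}\|_F^2\) where the paper uses \(\ker W=\ker\widetilde A\).
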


\begin{proof}
The equations \(Ax=0\) and \(AX=0\) are precisely
\[
    Y\widetilde A^T=0.
\]
The linear span of their constraint matrices contains
\(W=\widetilde A^T\widetilde A\).  Since \(W\succeq0\) and \(W\neq0\), it is
an exposing matrix for a first facial-reduction step of \(\mathcal R(A)\).

It remains to identify the exposed face.  Since \(\ker A=\range(C)\),
\[
    \ker(\widetilde A)=\range(V)=\ker(W).
\]
The characterization of faces of \(\Spsd{n+1}\) therefore gives
\[
    \Spsd{n+1}\cap W^\perp
    =\{VZV^T:Z\in\Spsd{r+1}\}
    =F_A.
\]
Hence every \(Y\in\mathcal R(A)\) can be written as \(Y=VZV^T\) for some
\(Z\succeq0\).

The preceding argument proves the assertions about \(W\) and \(F_A\).  It
remains to identify the restricted SDP system as \(\mathcal T(C)\).  Since
\(\widetilde A V=0\), every constraint matrix \(H\) associated with \(Ax=0\)
and \(AX=0\) satisfies \(V^THV=0\).  It therefore suffices to compare the
normalization and arrow equations under this parametrization.
Using the symmetric basis from \eqref{eq:symmetric_basis_matrices}, a direct
calculation gives
\[
    V^T(E_{ii}-E_{0i})V
    =\Phi(c_i).
\]
Hence, for \(Y=VZV^T\),
\[
    Y_{ii}-Y_{0i}
    =\langle E_{ii}-E_{0i},Y\rangle
    =\langle\Phi(c_i),Z\rangle.
\]
Likewise, the coefficient matrix \(E_{00}\in\mathbb S^{n+1}\) of
\(Y_{00}=1\) maps to \(E_{00}\in\mathbb S^{r+1}\), the coefficient matrix of
\(Z_{00}=1\).  Thus the restricted defining equations are exactly those of
\(\mathcal T(C)\), which in particular proves
\eqref{eq:RA-reduced-identity} and completes the proof.
\end{proof}

We next record the general mechanism that allows two facial-reduction steps to
be combined in the present setting.

\begin{lem}\label{lem:merge-fr-steps}
Let \(L\subseteq\mathbb S^p\) be an affine subspace such that
\(L\cap\mathbb S^p_+\) is nonempty.  Suppose that
\(W_1\in L^\perp\cap\mathbb S^p_+\) exposes a proper face
\(F:=\mathbb S^p_+\cap W_1^\perp\) containing
\(L\cap\mathbb S^p_+\), and assume that
\begin{equation}\label{eq:face-orthogonal-contained}
    F^\perp\subseteq L^\perp.
\end{equation}
Then, for every \(W_2\in L^\perp\cap(F^*\setminus F^\perp)\), there exists
\(\overline W\in L^\perp\cap\mathbb S^p_+\) such that
\[
    \mathbb S^p_+\cap\overline W^\perp
    =F\cap W_2^\perp.
\]
\end{lem}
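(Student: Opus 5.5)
Write \(F=\{VZV^T: Z\in\mathbb S^r_+\}\), where the columns of \(V\in\mathbb R^{p\times r}\) form an orthonormal basis of \(\ker W_1\) (orthonormal columns are convenient). Let \(U\) have orthonormal columns spanning \(\range(W_1)\), so \([V\ U]\) is orthogonal. We have
\[
F^\perp=\{S\in\mathbb S^p: V^TSV=0\},\qquad F^*=\{S\in\mathbb S^p: V^TSV\succeq0\}.
\]
The plan is to take \(\overline W:=W_2+tW_1\) for a large \(t>0\), then correct it by an element of \(F^\perp\). The hypothesis \eqref{eq:face-orthogonal-contained} guarantees that such a correction does not leave \(L^\perp\).

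First, \(\overline W\in L^\perp\) for every \(t\), since both \(W_1\) and \(W_2\) lie in \(L^\perp\). Next, write \(W_2\) in the block basis \([V\ U]\) with blocks \(M:=V^TW_2V\succeq0\), \(N:=V^TW_2U\), and \(K:=U^TW_2U\). Since \(U^TW_1U\succ0\), the matrix \(W_2+tW_1\) has lower-right block \(K+tU^TW_1U\), which can be made large. However, \(M\) may be singular while \(N\) is nonzero, so no choice of \(t\) guarantees positive semidefiniteness.

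The fix is to subtract an element of \(F^\perp\). Let \(P\) be the orthogonal projector onto \(\range(M)\) in \(\mathbb R^r\). Decompose \(N=PN+(I-P)N\). The off-diagonal block \(V(I-P)NU^T\), together with its transpose, defines a symmetric matrix \(S\) with \(V^TSV=0\), so \(S\in F^\perp\subseteq L^\perp\). Replacing \(W_2\) by \(W_2':=W_2-S\) keeps us in \(L^\perp\), leaves \(F\cap W_2^\perp\) unchanged (because \(S\perp F\)), and makes the off-diagonal block \(PN\), whose range lies in \(\range(M)\).

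Now set \(\overline W:=W_2'+tW_1\). Its Schur complement condition reads
\[
\bigl(K+tU^TW_1U\bigr)-N^TPM^{\dagger}PN\succeq0,
\]
together with the range condition \(\range(PN)\subseteq\range(M)\). Both hold for \(t\) large, so \(\overline W\succeq0\).

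It remains to identify the exposed face. Since \(\overline W\succeq0\), we have \(\mathbb S^p_+\cap\overline W^\perp=\{Y\succeq0:\range(Y)\subseteq\ker\overline W\}\). I will show \(\ker\overline W=V\ker M\). Take \(y=Va+Ub\in\ker\overline W\). Then \(0=y^T\overline Wy\ge t\,b^TU^TW_1Ub-(\text{the } W_2'\text{ part})\); more cleanly, one chooses \(t\) so that the lower-right block strictly dominates, i.e. the Schur complement above is positive definite rather than merely semidefinite. With that choice, \(\ker\overline W=\{Va: Ma=0\}\).

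On the other side, \(F\cap W_2^\perp=\{VZV^T: Z\succeq0,\ \langle M,Z\rangle=0\}=\{VZV^T: Z\succeq0,\ \range(Z)\subseteq\ker M\}\), which is exactly \(\{Y\succeq0:\range(Y)\subseteq V\ker M\}\). The two faces therefore coincide.

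The main obstacle is the possibly singular \(M\) with a nonzero off-diagonal block \(N\). This is handled by the \(F^\perp\)-correction above, which is precisely where \eqref{eq:face-orthogonal-contained} is used, and by choosing \(t\) large enough to make the Schur complement definite so that the kernel is computed exactly.
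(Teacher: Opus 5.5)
Your proof is correct and follows the same strategy as the paper. You shift \(W_2\) by an element of \(F^\perp\), which stays in \(L^\perp\) by the hypothesis \(F^\perp\subseteq L^\perp\), to obtain a positive semidefinite matrix, and then combine it with \(W_1\).

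The paper cites the identity \(F^*=\mathbb S^p_++F^\perp\), sets \(\overline W=W_1+\widetilde W_2\), and uses that \(Y\succeq0\) is orthogonal to this sum of PSD matrices exactly when it is orthogonal to each summand. You instead prove that decomposition directly. Your \(W_2-S+tW_1\) is a PSD representative of \(W_2+F^\perp\), since \(S-tW_1\in F^\perp\). You then identify the exposed face by an explicit kernel computation. That computation is rigorously justified by the block factorization with the positive definite Schur complement, which is cleaner than the inequality you sketch first.
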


\begin{proof}
For every face \(F\) of the positive semidefinite cone, the standard identity
\(F^*=\mathbb S^p_++F^\perp\) holds.  Hence we may write
\(W_2=\widetilde W_2+N\), where \(\widetilde W_2\succeq0\) and
\(N\in F^\perp\).
By \eqref{eq:face-orthogonal-contained}, \(N\in L^\perp\).  Since also
\(W_2\in L^\perp\), it follows that \(\widetilde W_2\in L^\perp\).
Set \(\overline W:=W_1+\widetilde W_2\), which belongs to
\(L^\perp\cap\mathbb S^p_+\).  Since both summands are positive semidefinite,
and since \(N\in F^\perp\) implies that \(W_2\) and \(\widetilde W_2\) agree
on \(F\), we obtain
\[
    \mathbb S^p_+\cap\overline W^\perp
    =F\cap\widetilde W_2^\perp
    =F\cap W_2^\perp.
\]
This proves the result.
\end{proof}

The condition \eqref{eq:face-orthogonal-contained} is equivalent to
\(L\subseteq\operatorname{span}(F)\).  Thus the entire affine subspace,
rather than only its positive semidefinite part, already lies in the linear
span of the exposed face.  In the application below, the equations \(Ax=0\)
and \(AX=0\) enforce this property for \(F_A\).  Consequently, the first
facial-reduction step replaces the ambient cone by a face whose linear span is
already imposed by the affine equations.  This structural property is what
allows the first step to be combined with a subsequent facial-reduction step.

The next proposition compares the lengths of facial-reduction sequences for
\(\mathcal R(A)\) and its reduced formulation \(\mathcal T(C)\).  We state
the stronger bound on maximum singularity degree because the later upper-bound
argument in
Section~\ref{sec:structured_upper_bound} must control arbitrary
facial-reduction sequences, not only shortest ones.

\begin{prop}\label{prop:reduced-order-bound}
Assume \(r\geq1\).  Then
\[
    \msd(\mathcal T(C))\leq r
    \qquad\text{and}\qquad
    \sd(\mathcal R(A))
    =
    \max\{1,\sd(\mathcal T(C))\}
    \leq r.
\]
\end{prop}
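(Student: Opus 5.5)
The plan is to prove the three assertions in turn: the bound on \(\msd(\mathcal T(C))\), the lower bound \(\sd(\mathcal R(A))\geq\max\{1,\sd(\mathcal T(C))\}\), and the matching upper bound. Write \(L\) for the affine subspace of \(\mathcal R(A)\) and \(L_T\subseteq\mathbb S^{r+1}\) for that of \(\mathcal T(C)\).

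\emph{Bound \(\msd(\mathcal T(C))\leq r\).} Faces of \(\Spsd{r+1}\) are indexed by subspaces \(\mathcal V\subseteq\R^{r+1}\), and every facial-reduction step strictly shrinks this subspace, so its dimension drops by at least one. The terminal face contains a feasible \(Z\) with \(Z_{00}=1\), hence \(Z\neq0\), so the terminal subspace has dimension at least one. Starting from dimension \(r+1\), any facial-reduction sequence therefore has at most \(r\) steps.

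\emph{Transfer between the two formulations.} By \Cref{lem:equality-face-reduction}, \(Z\mapsto VZV^T\) is a linear isomorphism from \(\mathbb S^{r+1}\) onto \(\operatorname{span}(F_A)\). It maps \(\Spsd{r+1}\) onto \(F_A\), maps \(L_T\) onto \(L\cap\operatorname{span}(F_A)\), and maps faces of \(\Spsd{r+1}\) to faces of \(\Spsd{n+1}\) contained in \(F_A\).

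The key containment is \(L\subseteq\operatorname{span}(F_A)\). Every \(Y\in L\) satisfies \(Y\widetilde A^T=0\), so \(\range(Y)\subseteq\ker\widetilde A=\range(V)\), and hence \(Y=VZV^T\). Equivalently \(F_A^\perp\subseteq L^\perp\), which is hypothesis \eqref{eq:face-orthogonal-contained} of \Cref{lem:merge-fr-steps}.

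Two facts about exposing matrices are needed.
\begin{enumerate}
\item If \(H\in L^\perp\), then \(V^THV\in L_T^\perp\). Indeed, for \(Z\) in the linear part of \(L_T\), the matrix \(VZV^T\) lies in the linear part of \(L\).
\item Conversely, every \(W'\in L_T^\perp\) lifts to some \(H\in L^\perp\) with \(V^THV=W'\). The constraint matrices of \(\mathcal T(C)\) are exactly \(V^TGV\) for the constraint matrices \(G\) of \(\mathcal R(A)\), namely \(E_{00}\) and \(E_{ii}-E_{0i}\); take \(H\) as the same linear combination of the \(G\)'s.
\end{enumerate}
In both directions the faces correspond via \(G_j=\{Z\succeq0: VZV^T\in F_j\}\), and the conditions \(H\in F^*\) and \(H\notin F^\perp\) on a face \(F\subseteq F_A\) depend only on \(V^THV\).

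\emph{Lower bound.} Since \(m\geq1\), the matrix \(W\) exposes a proper face containing \(\mathcal R(A)\), so Slater fails and \(\sd(\mathcal R(A))\geq1\). Now take a shortest sequence \(W_1,\dots,W_d\) for \(\mathcal R(A)\). Form \(G_j\) and the compressed matrices \(V^TW_jV\), which lie in \(L_T^\perp\cap G_{j-1}^*\). Discard the steps in which \(G_j=G_{j-1}\). The result is a facial-reduction sequence for \(\mathcal T(C)\) of length at most \(d\). It terminates at the minimal face because \(F_d\), the minimal face of \(\mathcal R(A)\), lies inside \(F_A\) and corresponds to the minimal face of \(\mathcal T(C)\) by \eqref{eq:RA-reduced-identity}. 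Hence \(\sd(\mathcal T(C))\leq\sd(\mathcal R(A))\).

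\emph{Upper bound.} Take a shortest sequence \(W'_1,\dots,W'_s\) for \(\mathcal T(C)\) and lift each matrix as in fact 2 to \(H_j\in L^\perp\). Then \(W,H_1,\dots,H_s\) is a facial-reduction sequence for \(\mathcal R(A)\) of length \(s+1\) ending at the minimal face. By \Cref{lem:merge-fr-steps}, applied with \(W_1=W\) and \(W_2=H_1\) (hypothesis checked above), the first two steps merge into a single step \(\overline W\in L^\perp\cap\Spsd{n+1}\). This gives length \(\max\{1,s\}\). Together with the lower bound this yields equality, and \(\max\{1,\sd(\mathcal T(C))\}\leq\max\{1,\msd(\mathcal T(C))\}\leq r\) because \(r\geq1\).

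The main obstacle is the careful correspondence between exposing matrices and faces, in particular in the lower-bound direction. There one must check that the compressed steps remain admissible after degenerate steps are dropped, and that the terminal faces match.
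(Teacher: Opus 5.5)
Your proposal is correct and follows the paper's proof. It uses the same subspace-dimension count for \(\msd(\mathcal T(C))\le r\), the same merging of the exposure by \(W\) with the first lifted step via \Cref{lem:merge-fr-steps} for the upper bound, and the same intersection of a shortest sequence for \(\mathcal R(A)\) with \(F_A\) for the lower bound. The only differences are minor: you verify \(F_A^\perp\subseteq L^\perp\) directly from \(\range(Y)\subseteq\ker\widetilde A=\range(V)\) rather than through the appendix identity \(L_A=F_A^\perp\), and you spell out the lifting of exposing matrices from \(\mathcal T(C)\) to \(\mathcal R(A)\), which the paper leaves implicit.
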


\begin{proof}
Since \(E_{00}\in\mathcal T(C)\), the standard matrix-order bound for a
nonzero feasible system over \(\mathbb S^{r+1}_+\) gives
\(\msd(\mathcal T(C))\leq r\).

We next prove the formula for \(\sd(\mathcal R(A))\).  Let \(L\) be the affine
subspace defined by the constraints in \eqref{eq:RA}.  By
\Cref{lem:equality-face-reduction}, the matrix \(W\) in
\eqref{eq:equality-face-matrices} belongs to
\(L^\perp\cap\mathbb S^{n+1}_+\) and exposes \(F_A\).
Let \(L_A\) denote the linear span of the constraint matrices for
\(Ax=0\) and \(AX=0\).  A direct linear-algebra calculation gives
\begin{equation}\label{eq:general_equality_constraint_span}
    L_A
    =
    \{N\in\mathbb S^{n+1}:V^TNV=0\}
    =F_A^\perp;
\end{equation}
see Appendix~\ref{app:rlt_equality_constraint_span}.  Every coefficient matrix
of \(Ax=0\) and \(AX=0\) is orthogonal to every \(Y\in L\), because these
constraints have zero right-hand sides.  Hence \(L_A\subseteq L^\perp\), and
\(F_A^\perp\subseteq L^\perp\), which is precisely the assumption in
\Cref{lem:merge-fr-steps}.

Let \(d:=\sd(\mathcal T(C))\).  By
\Cref{lem:equality-face-reduction}, after the initial step exposing \(F_A\),
the resulting formulation is \(\mathcal T(C)\).  Hence a shortest
facial-reduction sequence for \(\mathcal T(C)\) supplies \(d\) further steps.
If \(d=0\), then \(F_A\) is the minimal face containing \(\mathcal R(A)\), and
the step exposed by \(W\) gives \(\sd(\mathcal R(A))=1\).  If \(d\geq1\),
\Cref{lem:merge-fr-steps} combines the exposure by \(W\) with the first of
these \(d\) additional steps.  Applying the remaining \(d-1\) steps gives
\[
    \sd(\mathcal R(A))\leq d.
\]
Conversely, intersect the faces in a shortest facial-reduction sequence for
\(\mathcal R(A)\) with \(F_A\).  After repeated faces are discarded, the
remaining faces form a facial-reduction sequence for the formulation on
\(F_A\), of length at most \(\sd(\mathcal R(A))\).  Under
\(Z\mapsto VZV^T\), this becomes a facial-reduction sequence for
\(\mathcal T(C)\).  Therefore,
\[
    d\leq\sd(\mathcal R(A)).
\]
Combining the two cases yields
\[
    \sd(\mathcal R(A))
    =
    \max\{1,\sd(\mathcal T(C))\}.
\]
Finally, \eqref{eq:sd-le-msd} gives
\(\sd(\mathcal T(C))\leq\msd(\mathcal T(C))\leq r\).  Since \(r\geq1\), the
displayed maximum is at most \(r\).
\end{proof}

\subsection{Facial reduction under restriction to a face}
\label{subsec:restriction-psd-face}

The preceding proof used the fact that, after each face in a facial-reduction
sequence is intersected with a fixed face, every strict inclusion that remains
is a valid facial-reduction step for the restricted problem.  We now record
the general form of this restriction principle for later use.  Here the face
need not contain every feasible matrix, and the restriction is simply an
intersection in the original matrix space.

\begin{lem}[Restriction of a facial-reduction sequence]
\label{lem:restrict-fr-sequence}
Let \(L\subseteq\mathbb S^p\) be affine, let \(F\) be a face of
\(\Spsd{p}\) such that \(L\cap F\neq\emptyset\), and let
\(F_0,\ldots,F_d\) be a facial-reduction sequence for
\(L\cap\Spsd{p}\).  For \(j=1,\ldots,d\), let \(W_j\) be the exposing matrix
used in the step from \(F_{j-1}\) to \(F_j\).  For \(j=0,\ldots,d\), set
\[
    \overline F_j:=F_j\cap F.
\]
Then each \(\overline F_j\) is a face of \(F\) containing
\(L\cap F\), and
\[
    \overline F_j
    =
    \overline F_{j-1}\cap W_j^\perp
    \qquad (j=1,\ldots,d).
\]
After repeated faces are omitted from
\(\overline F_0,\ldots,\overline F_d\), the remaining faces and corresponding
matrices \(W_j\) form a partial facial-reduction sequence for \(L\cap F\).
\end{lem}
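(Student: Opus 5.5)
The plan is a direct induction on \(j\), with the face property, containment of \(L\cap F\), and the recursion \(\overline F_j=\overline F_{j-1}\cap W_j^\perp\) proved together. For \(j=0\), \(\overline F_0=F_0\cap F\); since \(F_0=\Spsd{p}\) this equals \(F\), which is a face of itself and contains \(L\cap F\). For the inductive step we will use \(F_j=F_{j-1}\cap W_j^\perp\). Intersecting both sides with \(F\) gives
\[
\overline F_j=F_{j-1}\cap W_j^\perp\cap F=\overline F_{j-1}\cap W_j^\perp,
\]
and this holds identically. Since \(F_j\) and \(F\) are both faces of \(\Spsd{p}\), and the intersection of faces is a face, \(\overline F_j\) is a face of \(\Spsd{p}\) contained in \(F\), hence a face of \(F\). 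For containment, \(F_j\supseteq L\cap\Spsd{p}\supseteq L\cap F\), so \(\overline F_j\supseteq L\cap F\).

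Next we check the step conditions for the restricted problem at indices where \(\overline F_j\subsetneq\overline F_{j-1}\). We need \(W_j\in L^\perp\cap(\overline F_{j-1}^*\setminus\overline F_{j-1}^\perp)\). Membership \(W_j\in L^\perp\) is inherited directly. Since \(\overline F_{j-1}\subseteq F_{j-1}\), dual cones reverse inclusion, so \(W_j\in F_{j-1}^*\subseteq\overline F_{j-1}^*\). It remains to show \(W_j\notin\overline F_{j-1}^\perp\). Suppose instead that \(W_j\) were orthogonal to all of \(\overline F_{j-1}\). Then \(\overline F_{j-1}\cap W_j^\perp=\overline F_{j-1}\), which contradicts the strict inclusion \(\overline F_j\subsetneq\overline F_{j-1}\). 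So each surviving index gives a valid step for \(L\cap F\) starting from \(F\).

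Omitting repeated faces leaves a strictly decreasing chain \(F=\overline F_{j_0}\supsetneq\overline F_{j_1}\supsetneq\cdots\). Each consecutive pair is linked by the recursion. If \(\overline F_{j-1}=\overline F_j\) for some intermediate indices, then the face reached after removing duplicates is still \(\overline F_{j_{k-1}}\cap W_{j_k}^\perp\), because the skipped steps did not change the face. This gives a partial facial-reduction sequence for \(L\cap F\); note that it need not terminate at the minimal face, so no terminal condition has to be checked.

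The only delicate point is the nondegeneracy condition \(W_j\notin\overline F_{j-1}^\perp\). In the original sequence it holds relative to \(F_{j-1}\), but it may fail relative to the smaller face, so it cannot simply be inherited. This is exactly why the lemma discards the repeated faces: at every retained index, the strictness of the inclusion supplies this condition, as shown above.
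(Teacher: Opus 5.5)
Your proof is correct and follows essentially the same route as the paper. Both intersect the recursion \(F_j=F_{j-1}\cap W_j^\perp\) with \(F\), inherit \(W_j\in L^\perp\cap\overline F_{j-1}^*\) from \(\overline F_{j-1}\subseteq F_{j-1}\), and obtain \(W_j\notin\overline F_{j-1}^\perp\) from the strictness of each retained inclusion.
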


\begin{proof}
Since \(F_j\) and \(F\) are faces of \(\Spsd{p}\), their intersection
\(\overline F_j\) is a face of \(F\) containing \(L\cap F\).  Moreover,
\[
    \overline F_j
    =
    (F_{j-1}\cap W_j^\perp)\cap F
    =
    \overline F_{j-1}\cap W_j^\perp.
\]
For each retained strict inclusion, the original sequence
gives \(W_j\in L^\perp\cap F_{j-1}^*\).  Since
\(\overline F_{j-1}\subseteq F_{j-1}\), we have
\(W_j\in\overline F_{j-1}^*\), while strictness gives
\(W_j\notin\overline F_{j-1}^\perp\).  Hence
\(W_j\in L^\perp\cap
(\overline F_{j-1}^*\setminus\overline F_{j-1}^\perp)\).  Thus \(W_j\)
exposes \(\overline F_j\) from \(\overline F_{j-1}\).
Removing the indices \(j\) for which
\(\overline F_j=\overline F_{j-1}\), together with the corresponding matrices
\(W_j\), yields the asserted partial facial-reduction sequence.
\end{proof}

The restriction principle also gives the following bound on
maximum singularity degree.

\begin{lem}[Maximum singularity degree under restriction]
\label{lem:msd-restriction-bound}
Let \(L\subseteq\mathbb S^p\) be affine.  Let
\(\mathcal G\subseteq\mathbb R^p\) be a subspace of dimension \(q\), and let
\(F_{\mathcal G}\) be the face of \(\mathbb S^p_+\) associated with
\(\mathcal G\).  Assume that \(L\cap F_{\mathcal G}\neq\emptyset\), choose
\(U\in\mathbb R^{p\times q}\) with orthonormal columns spanning
\(\mathcal G\), and define
\[
    L_{\mathcal G}:=
    \{Z\in\mathbb S^q:UZU^T\in L\}.
\]
Then
\[
    \msd(L\cap\mathbb S^p_+)
    \leq
    \dim(\mathcal G^\perp)
    +\msd(L_{\mathcal G}\cap\mathbb S^q_+).
\]
\end{lem}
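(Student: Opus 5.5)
Take an arbitrary facial-reduction sequence \(F_0=\mathbb S^p_+\supsetneq F_1\supsetneq\cdots\supsetneq F_d=F_{\min}\) for \(L\cap\mathbb S^p_+\), with exposing matrices \(W_1,\dots,W_d\), and bound \(d\). Write \(F_j=\{Y\succeq0:\range(Y)\subseteq\mathcal V_j\}\), so \(\mathcal V_0=\mathbb R^p\supsetneq\mathcal V_1\supsetneq\cdots\). Apply \Cref{lem:restrict-fr-sequence} with \(F:=F_{\mathcal G}\). The restricted faces \(\overline F_j=F_j\cap F_{\mathcal G}\) correspond to the subspaces \(\mathcal V_j\cap\mathcal G\). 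After repeated faces are discarded, they form a partial facial-reduction sequence for \(L\cap F_{\mathcal G}\). Under the isometry \(Z\mapsto UZU^T\) from \(\mathbb S^q_+\) onto \(F_{\mathcal G}\), this becomes a partial facial-reduction sequence for \(L_{\mathcal G}\cap\mathbb S^q_+\), with exposing matrices \(U^TW_jU\). This uses \(\langle U^TW_jU,Z\rangle=\langle W_j,UZU^T\rangle\), so \(U^TW_jU\in L_{\mathcal G}^\perp\) in the appropriate sense.

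The first thing to check is that a partial sequence has length at most \(\msd(L_{\mathcal G}\cap\mathbb S^q_+)\). This holds because any partial facial-reduction sequence can be continued to the minimal face: while the current face is not minimal, Slater fails relative to it, so another exposing matrix exists. A partial sequence is therefore a prefix of a full one. Hence the number \(k\) of indices \(j\) with \(\overline F_j\subsetneq\overline F_{j-1}\) satisfies \(k\leq\msd(L_{\mathcal G}\cap\mathbb S^q_+)\).

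It remains to count the indices with \(\overline F_j=\overline F_{j-1}\), meaning \(\mathcal V_j\cap\mathcal G=\mathcal V_{j-1}\cap\mathcal G\) while \(\mathcal V_j\subsetneq\mathcal V_{j-1}\). I would use the dimension identity
\[
\dim\mathcal V-\dim(\mathcal V\cap\mathcal G)=\dim(\mathcal V+\mathcal G)-\dim\mathcal G .
\]
The quantity \(\dim(\mathcal V_j+\mathcal G)-q\) is nonincreasing in \(j\), starts at most at \(p-q=\dim\mathcal G^\perp\), and is nonnegative. At a step where the intersection with \(\mathcal G\) is unchanged but \(\mathcal V_j\) strictly shrinks, \(\dim\mathcal V_j-\dim(\mathcal V_j\cap\mathcal G)\) strictly decreases. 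Hence \(\dim(\mathcal V_j+\mathcal G)\) strictly decreases at such a step. Since \(\dim(\mathcal V_j+\mathcal G)\) never increases, there are at most \(\dim\mathcal G^\perp\) such steps. Therefore \(d\leq\dim\mathcal G^\perp+k\), and taking the maximum over all sequences gives the claim.

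I expect the main obstacle to be the transfer from the restricted sequence to a statement about \(L_{\mathcal G}\). Two points need care. First, \(L_{\mathcal G}\cap\mathbb S^q_+\) equals the pullback of \(L\cap F_{\mathcal G}\), and this set is nonempty by assumption. Second, the terminal face of the restricted sequence need not be minimal, which is why the prefix-extension argument above is required. I would check that each retained step of the pulled-back sequence satisfies \(U^TW_jU\in\overline F_{j-1}'^{\,*}\setminus\overline F_{j-1}'^{\,\perp}\) directly from \Cref{lem:restrict-fr-sequence}.
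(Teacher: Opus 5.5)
Your proposal is correct and follows the paper's proof essentially step for step. You restrict via \Cref{lem:restrict-fr-sequence}, bound the strict steps by \(\msd(L_{\mathcal G}\cap\mathbb S^q_+)\) by extending the partial sequence, and count the repeated steps with a dimension potential; your \(\dim(\mathcal V_j+\mathcal G)-q\) coincides with the paper's \(\delta_j=\dim P_{\mathcal G^\perp}(\mathcal V_j)\), and your justification that a partial sequence can be extended fills in a step the paper only asserts.
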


\begin{proof}
Consider an arbitrary facial-reduction sequence of length
\(d\) for \(L\cap\mathbb S^p_+\), with associated faces
\(F_0,\ldots,F_d\).  Let
\(\mathbb R^p=\mathcal V_0\supsetneq\cdots\supsetneq\mathcal V_d\) be their
associated subspaces, and set
\(\overline F_j:=F_j\cap F_{\mathcal G}\).  By
Lemma~\ref{lem:restrict-fr-sequence}, the strict inclusions among the faces
\(\overline F_j\) form a partial facial-reduction sequence for
\(L\cap F_{\mathcal G}\).  Under the face parametrization
\(Z\mapsto UZU^T\), this is a partial facial-reduction sequence for
\(L_{\mathcal G}\cap\mathbb S^q_+\).  This partial sequence can be extended
to a facial-reduction sequence.  Hence at most
\(\msd(L_{\mathcal G}\cap\mathbb S^q_+)\) indices satisfy
\(\overline F_j\subsetneq\overline F_{j-1}\).

It remains to count the indices for which
\(\overline F_j=\overline F_{j-1}\).  Set
\(\mathcal G_j:=\mathcal V_j\cap\mathcal G\), let
\(P_{\mathcal G^\perp}\) denote the orthogonal projector onto
\(\mathcal G^\perp\), and define
\[
    \delta_j:=
    \dim\bigl(P_{\mathcal G^\perp}(\mathcal V_j)\bigr).
\]
The face \(\overline F_j\) is associated with
\(\mathcal G_j\).  Moreover, the kernel of \(P_{\mathcal G^\perp}\)
restricted to \(\mathcal V_j\) is \(\mathcal G_j\), so rank--nullity gives
\[
    \dim\mathcal V_j=\dim\mathcal G_j+\delta_j.
\]
If \(\overline F_j=\overline F_{j-1}\), then
\(\mathcal G_j=\mathcal G_{j-1}\), whereas
\(\mathcal V_j\subsetneq\mathcal V_{j-1}\).  Therefore,
\(\delta_j\leq\delta_{j-1}-1\).  Since
\(\delta_0=\dim(\mathcal G^\perp)\), at most
\(\dim(\mathcal G^\perp)\) indices yield repeated faces.  Combining the two
counts proves the result.
\end{proof}

\section{A high-singularity-degree equality-generated SDP--RLT relaxation}
\label{sec:binary_high_singularity_degree}

Semidefinite programs can be highly ill-conditioned when Slater's condition
fails, and high singularity degree is a structural source of this difficulty
\cite{sturm2000error,sremac2021error}.  Although general SDP systems with high
singularity degree are known, those examples do not establish whether the same
behavior can occur in SDP relaxations of linearly constrained binary programs.
Such relaxations contain highly structured equations tied to individual
variables, such as the arrow constraints $Y_{ii}=Y_{0i}$ in
\eqref{eq:RAb}, which might appear to preclude long
facial-reduction sequences.  In particular, Sturm's classical example
\cite[Example~2]{sturm2000error} does not directly answer this question,
because it does not include the binary arrow constraints and its data matrices
arise from linearizing the quadratic equations $x_i^2=x_{i-1}$, rather than
from linear constraints on binary variables.

Here we answer the question positively.  For every $n\geq2$, we construct an
SDP relaxation of a linearly constrained binary set in $n$ variables whose
singularity degree is exactly $\lfloor n/2\rfloor$.  Thus, there exist such
relaxations whose singularity degree grows linearly with the number of binary
variables.

The proof proceeds in three stages.  We first construct a linearly constrained
binary singleton whose nullspace has a Vandermonde basis.  We then characterize
the positive semidefinite matrices in the linear span of its reduced
arrow-constraint matrices.  Finally, we combine these ingredients with the
reduction in Section~\ref{sec:general_equality_generated} to compute the exact
singularity degree.

Throughout this section, fix $n\geq2$ and set
\[
m:=\left\lceil\frac n2\right\rceil,
\qquad
r:=n-m=\left\lfloor\frac n2\right\rfloor.
\]
For $s=1,\ldots,r$, define the column vector
$c_s(t)\in\mathbb{R}^s$ by
\[
c_s(t):=\bigl(t,t^2,\ldots,t^s\bigr)^T.
\]
\subsection{A Vandermonde construction of a binary singleton}

Define $C\in\mathbb{R}^{n\times r}$ by
\begin{equation}\label{eq:rlt_linear_binary_C}
C_{kj}:=k^j,\qquad
k=1,\ldots,n,\quad j=1,\ldots,r.
\end{equation}
Thus, the $k$-th row of $C$ is
\(c_r(k)^T\).
The matrix $C$ has full column rank.  To give an explicit linear description
of its range, partition it as
\[
C=\begin{pmatrix}C_{\rm top}\\C_{\rm bot}\end{pmatrix},
\qquad C_{\rm top}\in\mathbb{R}^{r\times r},
\]
where $C_{\rm top}$ consists of the first $r$ rows.  The matrix
$C_{\rm top}$ is a row-scaled ordinary Vandermonde matrix and is nonsingular.
Set
\begin{equation}\label{eq:rlt_linear_binary_A}
A:=\begin{pmatrix}-C_{\rm bot}C_{\rm top}^{-1}&I_m\end{pmatrix}
\in\mathbb{R}^{m\times n}.
\end{equation}
Then $A$ has full row rank and
\begin{equation}\label{eq:rlt_kernel_A_range_C}
\ker A=\range(C).
\end{equation}

Consider the linearly constrained binary set
\begin{equation}\label{eq:rlt_linear_binary_set}
P_n
:=
\{x\in\{0,1\}^n\mid Ax=0\}.
\end{equation}
Thus $P_n$ is defined using only linear equalities and binarity.
Since the equations are homogeneous, \(0\in P_n\), so \(P_n\) is nonempty.
We prove the stronger fact that \(P_n\) is a singleton.  Thus the high
singularity degree established below occurs even though the underlying binary
feasible set is trivial.

\begin{prop}\label{prop:rlt_linear_binary_set_singleton}
The binary set $P_n$ defined in
\eqref{eq:rlt_linear_binary_set} is the singleton $\{0\}$.
\end{prop}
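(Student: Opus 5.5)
Since \(\ker A=\range(C)\) by \eqref{eq:rlt_kernel_A_range_C}, a binary vector \(x\in P_n\) is exactly a vector \(x\in\{0,1\}^n\) of the form \(x=Cy\) for some \(y\in\mathbb R^r\). By \eqref{eq:rlt_linear_binary_C}, this says
\[
x_k=\sum_{j=1}^r y_j k^j=p(k),\qquad k=1,\ldots,n,
\]
where \(p(t):=\sum_{j=1}^r y_jt^j\) is a real polynomial of degree at most \(r\) with \(p(0)=0\). We must show that \(p(k)\in\{0,1\}\) for all \(k=1,\ldots,n\) forces \(y=0\). Then \(x=0\), and since \(0\in P_n\) we get \(P_n=\{0\}\).

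The key step is a root-counting argument. Let \(S_0:=\{k\in\{1,\ldots,n\}: p(k)=0\}\) and \(S_1:=\{k: p(k)=1\}\), which partition \(\{1,\ldots,n\}\).
\begin{itemize}
\item Suppose \(|S_0|\geq r\). Then \(p\) has at least \(r+1\) distinct roots, namely \(0\) together with the points of \(S_0\). Since \(\deg p\leq r\), this gives \(p\equiv0\), so \(y=0\).
\item Otherwise \(|S_0|\leq r-1\), so \(|S_1|\geq n-r+1=m+1\geq r+1\), because \(m=\lceil n/2\rceil\geq r\). The polynomial \(p-1\) has degree at most \(r\) and vanishes at at least \(r+1\) distinct points. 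Hence \(p\equiv1\), which contradicts \(p(0)=0\).
\end{itemize}
Only the first case survives, so \(y=0\).

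Two details need checking. In the first case, \(y=0\) follows from \(p\equiv0\) because the coefficients of the zero polynomial vanish; equivalently, \(C\) has full column rank. The only potential obstacle is the counting inequality \(n-(r-1)\geq r+1\), i.e.\ \(n\geq 2r\). This holds by the choice \(r=\lfloor n/2\rfloor\), and this is precisely where the balanced choice of \(m\) and \(r\) is used. I expect no other difficulty.
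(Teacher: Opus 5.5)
Your proof is correct and follows essentially the same route as the paper: you write $x=Cy$, read $x_k=p(k)$ for a polynomial $p$ of degree at most $r$ with $p(0)=0$, and conclude by root counting using $n\geq 2r$. The only difference is cosmetic. The paper applies root counting once to $p(p-1)$, which has degree at most $2r$ and vanishes at $0,1,\ldots,n$, and then uses that $\mathbb{R}[t]$ has no zero divisors; you instead split by pigeonhole into $|S_0|\geq r$ or $|S_1|\geq r+1$ and apply root counting to $p$ or $p-1$ separately.
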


\begin{proof}
Let $x\in P_n$.  By \eqref{eq:rlt_kernel_A_range_C}, there is a
unique $y\in\mathbb{R}^r$ such that $x=Cy$.  Define the polynomial
\[
p(t):=c_r(t)^Ty.
\]
It has degree at most $r$ and satisfies $p(0)=0$.  Since
\(p(k)=x_k\in\{0,1\}\),
\[
p(k)\bigl(p(k)-1\bigr)=0,
\qquad k=1,\ldots,n.
\]
Thus the polynomial $p(t)(p(t)-1)$, whose degree is at most $2r$, vanishes
at the $n$ distinct positive evaluation points and also at $t=0$.  Since
$n+1>2r$, it must vanish identically.  Therefore $p$ is identically $0$ or
$1$.  The equality $p(0)=0$ rules out the latter, so $p\equiv0$.  Hence
$y=0$ and $x=0$.
\end{proof}

For the matrix \(A\) in \eqref{eq:rlt_linear_binary_A}, consider the
equality-generated SDP--RLT relaxation \(\mathcal R(A)\) in \eqref{eq:RA}.
The RLT strengthening $AX=0$ is essential for the high singularity degree
established below.  Indeed, omitting $AX=0$ leaves the basic Shor relaxation
associated with $Ax=0$, whose singularity degree is at most one
\cite{hu2026shorsingularity}.
Here \(r\geq1\), \(A\) has full row rank, \(C\) has full column rank, and
\(\ker A=\range(C)\) by \eqref{eq:rlt_kernel_A_range_C}.  Thus
\(\mathcal R(A)\) and \(\mathcal T(C)\) are precisely the pair of systems
covered by Proposition~\ref{prop:reduced-order-bound}, which gives
\[
    \sd(\mathcal R(A))
    =
    \max\{1,\sd(\mathcal T(C))\}.
\]
Consequently, determining the singularity degree of \(\mathcal R(A)\) reduces
to analyzing the smaller system \(\mathcal T(C)\).

\subsection{Positive semidefinite matrices in the linear span of the reduced
arrow-constraint matrices}

We first characterize the positive semidefinite matrices in the linear span of
the arrow-constraint matrices \(\Phi(c_r(k))\), \(k=1,\ldots,n\), of
\(\mathcal T(C)\).
We state it for every \(s=1,\ldots,r\), because the subsequent
facial-reduction argument successively lowers \(s\).
The specialization of \(\Phi\) to \(c_s(t)\) is the symmetric matrix
polynomial
\begin{equation}\label{eq:rlt_arrow_matrix_polynomial}
\Phi_s(t)
:=\Phi(c_s(t))
=
\begin{pmatrix}
0&-\frac12c_s(t)^T\\[1mm]
-\frac12c_s(t)&c_s(t)c_s(t)^T
\end{pmatrix}
\in\mathbb{S}^{s+1}.
\end{equation}
The entries of $\Phi_s(t)$ involve exactly the consecutive powers
$t,t^2,\ldots,t^{2s}$.

\begin{lem}\label{lem:rlt_arrow_coefficient_span}
For every $s=1,\ldots,r$,
\[
\operatorname{span}\{\Phi_s(k):k=1,\ldots,n\}
\cap\mathbb S^{s+1}_+
=
\{\lambda E_{ss}:\lambda\geq0\}.
\]
Here $E_{ss}=e_se_s^T$ is the diagonal matrix unit in
$\mathbb{S}^{s+1}$, as defined in \eqref{eq:symmetric_basis_matrices}.
\end{lem}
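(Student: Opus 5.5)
Write a general element of the span as \(M=\sum_{k=1}^n\alpha_k\Phi_s(k)\) and encode the coefficients through the power sums
\[
    \mu_j:=\sum_{k=1}^n\alpha_k k^j,\qquad j=1,\ldots,2s.
\]
By \eqref{eq:rlt_arrow_matrix_polynomial}, indexing rows and columns by \(0,1,\ldots,s\), we get \(M_{00}=0\), \(M_{0j}=-\tfrac12\mu_j\), and \(M_{ij}=\mu_{i+j}\) for \(1\le i,j\le s\). So \(M\) is a bordered Hankel matrix in the moments \(\mu_1,\ldots,\mu_{2s}\). Both inclusions then reduce to statements about these moments.

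For the inclusion ``\(\subseteq\)'', assume \(M\succeq0\). Since \(M_{00}=0\), positive semidefiniteness forces row \(0\) to vanish, so \(\mu_1=\cdots=\mu_s=0\). Next I would run an induction along the diagonal of the Hankel block \(H=(\mu_{i+j})_{i,j=1}^s\), which is positive semidefinite. Suppose rows \(1,\ldots,i-1\) of \(H\) are already known to vanish, so that \(\mu_1,\ldots,\mu_{s+i-1}\) are zero. Then the diagonal entry \(H_{ii}=\mu_{2i}\) is zero whenever \(2i\le s+i-1\), that is, \(i\le s-1\). A zero diagonal entry of a PSD matrix kills its row, giving \(\mu_{i+1},\ldots,\mu_{i+s}=0\). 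The base case \(i=1\) uses \(\mu_2=0\) when \(s\ge2\); the case \(s=1\) is immediate. Iterating up to \(i=s-1\) yields \(\mu_1=\cdots=\mu_{2s-1}=0\). Hence every entry of \(M\) vanishes except \(M_{ss}=\mu_{2s}\), which is nonnegative, so \(M=\mu_{2s}E_{ss}\) with \(\mu_{2s}\ge0\).

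For the inclusion ``\(\supseteq\)'', it suffices to exhibit \(E_{ss}\) in the span. This amounts to finding \(\alpha\in\R^n\) with \(\mu_j=0\) for \(j<2s\) and \(\mu_{2s}=1\). The coefficient matrix \((k^j)_{j\le 2s,\,k\le n}\) is a \(2s\times n\) row-scaled Vandermonde matrix on the distinct nonzero nodes \(1,\ldots,n\). Since \(2s\le 2r\le n\), it has full row rank \(2s\), so the linear system is solvable. Nonnegative multiples of \(E_{ss}\) are then in the span and are PSD.

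The only delicate point is the bookkeeping in the diagonal induction. One must check that at each step the index \(2i\) has already been covered by the vanishing rows, and the inequality \(i\le s-1\) is exactly what makes this work. The solvability step relies on \(n\ge 2r\), which holds by the choice \(r=\lfloor n/2\rfloor\).
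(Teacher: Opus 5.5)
Your proof is correct and follows essentially the same route as the paper. Both arguments use the bordered Hankel structure in the power sums $\mu_1,\ldots,\mu_{2s}$, then zero-propagation along the diagonal (valid for $i\le s-1$) to force $\mu_1=\cdots=\mu_{2s-1}=0$. Both then realize $E_{ss}$ in the span by solving the Vandermonde moment system, using $n\geq 2r\geq 2s$.
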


\begin{proof}
Let
\[
W:=\sum_{k=1}^n\mu_k
\Phi_s(k),
\qquad
    a_\ell:=\sum_{k=1}^n\mu_k
    k^\ell,\quad
    \ell=1,\ldots,2s,
\]
and suppose that $W\succeq0$.  Since $W_{00}=0$, positive semidefiniteness
forces row and column $0$ of $W$ to vanish.  The entries of that row are
\[
W_{0j}=-\frac12a_j,
\qquad j=1,\ldots,s,
\]
so
\begin{equation}\label{eq:rlt_hankel_initial_moments}
a_1=\cdots=a_s=0.
\end{equation}
The lower-right block of $W$ is Hankel, with
\begin{equation}\label{eq:rlt_hankel_entries}
W_{ij}=a_{i+j},
\qquad i,j=1,\ldots,s.
\end{equation}
We use the standard zero-propagation argument for positive semidefinite
Hankel matrices; see also \cite[Lemma~4.6]{waki2013facial}.  Suppose for some
$\ell\in\{1,\ldots,s-1\}$ that
$a_1=\cdots=a_{s+\ell-1}=0$.  Then
\[
W_{\ell\ell}=a_{2\ell}=0,
\]
because $2\ell\leq s+\ell-1$.  A zero diagonal entry of a positive
semidefinite matrix forces the corresponding row and column to vanish.  In
particular, $W_{\ell s}=a_{s+\ell}=0$.  If $s=1$,
\eqref{eq:rlt_hankel_initial_moments} already gives the desired conclusion.
If $s\geq2$, starting from \eqref{eq:rlt_hankel_initial_moments} and applying
this argument for $\ell=1,\ldots,s-1$ gives the same conclusion.  Thus, in
either case,
\[
a_1=\cdots=a_{2s-1}=0.
\]
Consequently, $W=a_{2s}E_{ss}$, and $a_{2s}\geq0$ because $W\succeq0$.

Conversely,
the integers \(k=1,\ldots,n\) are distinct and nonzero. Since
$n\geq2r\geq2s$, the matrix
\((k^\ell)_{k=1,\ldots,n,\ \ell=1,\ldots,2s}\)
has full column rank.  Hence there
exist $\nu_1,\ldots,\nu_n$ such that
\[
\sum_{k=1}^n\nu_k
k^\ell
=
\begin{cases}
0,&\ell=1,\ldots,2s-1,\\
1,&\ell=2s.
\end{cases}
\]
It follows from \eqref{eq:rlt_arrow_matrix_polynomial} that
\[
\sum_{k=1}^n\nu_k
\Phi_s(k)
=E_{ss}.
\]
Thus every $\lambda E_{ss}$ with $\lambda\geq0$ belongs to the intersection,
which proves the stated equality.
\end{proof}

\begin{ex}
Let $n=4$ and $s=r=2$.  Then
\[
c_2(t)=\begin{pmatrix}t\\t^2\end{pmatrix},
\qquad
\Phi_2(t)
=
\begin{pmatrix}
0&-\frac12t&-\frac12t^2\\
-\frac12t&t^2&t^3\\
-\frac12t^2&t^3&t^4
\end{pmatrix}.
\]
For
\[
W:=\sum_{k=1}^4\mu_k\Phi_2(k),
\qquad
    a_\ell:=\sum_{k=1}^4\mu_k k^\ell,\quad
    \ell=1,\ldots,4,
\]
we have
\[
W=
\begin{pmatrix}
0&-\frac12a_1&-\frac12a_2\\
-\frac12a_1&a_2&a_3\\
-\frac12a_2&a_3&a_4
\end{pmatrix}.
\]
If $W\succeq0$, then $W_{00}=0$ forces $a_1=a_2=0$.  Since
$W_{11}=a_2=0$, row and column $1$ must also vanish, giving $a_3=0$.
Therefore, $W=a_4E_{22}$ with $a_4\geq0$.  Conversely, direct calculation
gives
\[
E_{22}
=-\frac16\Phi_2(1)+\frac14\Phi_2(2)
-\frac16\Phi_2(3)+\frac1{24}\Phi_2(4).
\]
Thus, this example realizes both inclusions in
\Cref{lem:rlt_arrow_coefficient_span}.
\end{ex}

\subsection{Exact singularity degree of the construction}

For $s=1,\ldots,r$, define
\begin{equation}\label{eq:rlt_reduced_family}
\mathcal T_s
:=
\left\{
Z\in\mathbb{S}^{s+1}_+
\ \middle|\ 
Z_{00}=1,\quad
\langle
\Phi_s(k),Z
\rangle=0,
\ k=1,\ldots,n
\right\}.
\end{equation}
By \eqref{eq:rlt_linear_binary_C}, the rows of \(C\) are
\(c_r(k)^T\).
Comparing \eqref{eq:rlt_reduced_family} with
\eqref{eq:T-general-system} therefore gives
\[
    \mathcal T(C)=\mathcal T_r.
\]
We next compute its singularity degree by analyzing the family
$\{\mathcal T_s\}_{s=1}^r$.

\begin{lem}\label{lem:rlt_reduced_family_degree}
For every $s=1,\ldots,r$,
\begin{equation}\label{eq:rlt_reduced_family_degree}
\sd(\mathcal T_s)=\msd(\mathcal T_s)=s.
\end{equation}
\end{lem}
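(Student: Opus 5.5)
Since \(\sd\leq\msd\) by \eqref{eq:sd-le-msd}, and since the order-bound argument from the proof of Proposition~\ref{prop:reduced-order-bound} already gives \(\msd(\mathcal T_s)\leq s\) (the matrix variable has order \(s+1\) and \(E_{00}\in\mathcal T_s\)), it suffices to prove \(\sd(\mathcal T_s)\geq s\). I will do this by induction on \(s\), showing that every facial-reduction sequence for \(\mathcal T_s\) has length at least \(s\). Lemma~\ref{lem:rlt_arrow_coefficient_span} is the tool for the first step.

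\emph{Minimal face.} First I will identify the minimal face of \(\mathcal T_s\). Take \(Z\in\mathcal T_s\) and apply \(\langle E_{ss},Z\rangle=0\), which holds because \(E_{ss}\) lies in the span of the \(\Phi_s(k)\) with zero right-hand sides. This gives \(Z_{ss}=0\), so row and column \(s\) of \(Z\) vanish. The restriction of \(Z\) to indices \(0,\ldots,s-1\) then satisfies \(\langle\Phi_{s-1}(k),\cdot\rangle=0\) for all \(k\), because \(c_s(k)\) truncated to its first \(s-1\) coordinates is \(c_{s-1}(k)\). Iterating this, every \(Z\in\mathcal T_s\) is supported on index \(0\), so \(\mathcal T_s=\{E_{00}\}\), and the minimal face is \(\{\lambda E_{00}:\lambda\geq0\}\). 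In particular, \(\mathcal T_s\) fails Slater's condition, and reaching the minimal face from \(\mathbb S^{s+1}_+\) requires removing \(s\) dimensions.

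\emph{Induction on \(s\).} For the lower bound, the first exposing matrix \(W_1\) must lie in \(L^\perp\cap\mathbb S^{s+1}_+\). Because \(L\) contains \(E_{00}\), the space \(L^\perp\) consists of the span of the \(\Phi_s(k)\); the coefficient of \(E_{00}\) is forced to be zero. By Lemma~\ref{lem:rlt_arrow_coefficient_span}, \(W_1=\lambda E_{ss}\) with \(\lambda>0\), so the first face is \(F_1=\{Z\succeq0:Z_{ss}=0\}\), which is a copy of \(\mathbb S^s_+\) on indices \(0,\ldots,s-1\). Restricting the constraints of \(\mathcal T_s\) to \(F_1\) yields exactly \(\mathcal T_{s-1}\), by the truncation identity above. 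The argument in Proposition~\ref{prop:reduced-order-bound} shows that the remaining steps form a facial-reduction sequence for the restricted system: each \(W_j\) projects onto the face as \(V^TW_jV\), which lies in the restricted \(L^\perp\). Therefore every sequence has length \(1+\) (length of a sequence for \(\mathcal T_{s-1}\)), which by induction is at least \(1+(s-1)=s\).

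\emph{Base case and main obstacle.} The base case is \(s=1\). Here \(\mathcal T_1=\{E_{00}\}\) is not strictly feasible, so at least one step is needed. The main obstacle is the reduction step: I must justify that the restricted problem on \(F_1\) is precisely \(\mathcal T_{s-1}\), including that its \(L^\perp\) is the span of the \(\Phi_{s-1}(k)\) together with the coordinates now removed, so that Lemma~\ref{lem:rlt_arrow_coefficient_span} applies again at level \(s-1\). This requires \(s-1\geq1\) and \(n\geq2(s-1)\), both of which hold. Because the first exposing matrix is forced up to scaling, the argument gives \(\msd\geq s\) as well as \(\sd\geq s\), and together with the upper bound this yields equality in \eqref{eq:rlt_reduced_family_degree}.
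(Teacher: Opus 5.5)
Your proof is correct and follows essentially the same route as the paper. Lemma~\ref{lem:rlt_arrow_coefficient_span} forces every first exposing matrix to be a positive multiple of $E_{ss}$, which restricts $\mathcal T_s$ to $\mathcal T_{s-1}$, and induction on $s$ finishes the argument.

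The differences are cosmetic. You take the upper bound $\msd(\mathcal T_s)\leq s$ from the matrix-order bound, whereas the paper reads off both equalities from the fact that every sequence makes the same forced reduction down to the strictly feasible $\mathcal T_0=\{1\}$. Your explicit computation $\mathcal T_s=\{E_{00}\}$ is not needed.
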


\begin{proof}
Set $\mathcal T_0:=\{1\}$.  At the first step of any facial-reduction sequence
for $\mathcal T_s$, the exposing matrix has the form
\[
W=\sum_{k=1}^n\mu_k
\Phi_s(k)
\succeq0,
\]
because the multiplier of the normalization must be zero.  By
\Cref{lem:rlt_arrow_coefficient_span}, every such nonzero matrix is a positive
multiple of $E_{ss}$, and $E_{ss}$ itself belongs to this linear span.  This step therefore restricts the problem to the face
$\mathbb{S}^{s+1}_+\cap E_{ss}^{\perp}$, on which row and column $s$ vanish.
Removing this last row and column replaces $c_s(t)$ by $c_{s-1}(t)$ and
reduces $\mathcal T_s$ to $\mathcal T_{s-1}$.  Thus every
facial-reduction sequence makes the same reduction.  Since $\mathcal T_0$ is
strictly feasible in $\mathbb{S}^1_+$, induction proves both equalities in
\eqref{eq:rlt_reduced_family_degree}.
\end{proof}

We now transfer this calculation back to the original SDP--RLT relaxation.

\begin{prop}\label{prop:rlt_linear_binary_high_degree}
For every $n\geq 2$, let \(A\) be the matrix defined in
\eqref{eq:rlt_linear_binary_A}.  Then
\[
\sd(\mathcal R(A))=r=\left\lfloor\frac n2\right\rfloor.
\]
\end{prop}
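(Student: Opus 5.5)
The plan is to assemble results already established in this section; no new estimate should be needed.

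First, I check that the hypotheses of Proposition~\ref{prop:reduced-order-bound} hold. Since \(n\geq2\), we have \(r=\lfloor n/2\rfloor\geq1\) and \(m=\lceil n/2\rceil\geq1\). The matrix \(A\) in \eqref{eq:rlt_linear_binary_A} has full row rank because it contains the block \(I_m\). The matrix \(C\) in \eqref{eq:rlt_linear_binary_C} has full column rank because \(C_{\rm top}\) is nonsingular. Identity \eqref{eq:rlt_kernel_A_range_C} gives \(\ker A=\range(C)\). Proposition~\ref{prop:reduced-order-bound} therefore yields
\[
\sd(\mathcal R(A))=\max\{1,\sd(\mathcal T(C))\}.
\]

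Second, I identify \(\mathcal T(C)\) with \(\mathcal T_r\). This holds because the rows of \(C\) are \(c_r(k)^T\), so the constraint matrices \(\Phi(c_r(k))\) in \eqref{eq:T-general-system} are exactly \(\Phi_r(k)\) in \eqref{eq:rlt_reduced_family}. Lemma~\ref{lem:rlt_reduced_family_degree} with \(s=r\) then gives \(\sd(\mathcal T(C))=r\). Since \(r\geq1\), the maximum equals \(r\), so \(\sd(\mathcal R(A))=r=\lfloor n/2\rfloor\).

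The only point needing care is the identification \(\mathcal T(C)=\mathcal T_r\) together with the applicability of Lemma~\ref{lem:rlt_reduced_family_degree}. That lemma rests on Lemma~\ref{lem:rlt_arrow_coefficient_span}, whose converse direction needs \(n\geq2s\); here \(s\leq r=\lfloor n/2\rfloor\) guarantees this. With both facts in hand, the proof is a direct chaining of the cited results.
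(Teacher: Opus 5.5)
Your proposal is correct and is essentially the paper's own proof. It chains \Cref{prop:reduced-order-bound} with the identification $\mathcal T(C)=\mathcal T_r$ and \Cref{lem:rlt_reduced_family_degree} at $s=r$, so that $\sd(\mathcal R(A))=\max\{1,r\}=r$. Your explicit checks of the hypotheses (full row rank of $A$, full column rank of $C$, $r\geq1$, and $n\geq 2s$) simply spell out what the paper leaves implicit.
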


\begin{proof}
The reduced system satisfies \(\mathcal T(C)=\mathcal T_r\).  Therefore,
\Cref{prop:reduced-order-bound,lem:rlt_reduced_family_degree} give
\[
    \sd(\mathcal R(A))
    =\max\{1,\sd(\mathcal T_r)\}
    =\max\{1,r\}
    =r.
\]
\end{proof}

For every $n\geq2$, the construction uses $n$ binary variables
and an $(n+1)\times(n+1)$ matrix, while its singularity degree is
\[
r=\left\lfloor\frac n2\right\rfloor.
\]
The point is nontrivial at the formulation level: although the binary set is
defined solely by linear equalities, the standard equality-generated SDP--RLT
strengthening produces a
facial-reduction sequence whose length grows linearly
with the problem dimension.  Thus high singularity degree, a structural
source of weak error bounds and numerical sensitivity, can arise in SDP
relaxations of linearly constrained binary sets.

The construction above supplies the lower bound \(\lfloor n/2\rfloor\).  The
next section proves that this value is globally optimal within the
equality-generated SDP--RLT class.

\section{The sharp upper bound for the equality-generated SDP--RLT class}
\label{sec:structured_upper_bound}

This section proves that, for every full-column-rank
\(C\in\R^{n\times r}\), the maximum singularity degree of
\(\mathcal T(C)\) is at most \(\min\{n-r,r\}\), and hence so is its
singularity degree.  Corollary~\ref{cor:global-rlt-bound} then resolves the
extremal problem stated in \eqref{eq:intro-extremal-problem}.

For a full-column-rank matrix, we call the number of rows minus the number of
columns its \emph{row redundancy}.  It is the number of rows beyond the
minimum required for full column rank.  In particular, the row redundancy of
\(C\) is \(m:=n-r\).

The case \(r=0\) is immediate because \(\mathcal T(C)=\{1\}\).  For
\(r\geq1\), Proposition~\ref{prop:reduced-order-bound} gives the upper bound
\(\msd(\mathcal T(C))\leq r\), so the main task is to prove the complementary
bound by the row redundancy \(m\).  The proof has two ingredients.
Subsection~\ref{subsec:first-step-support-row-redundancy} analyzes the first
facial-reduction step and derives the row-redundancy bound needed for the
induction.
Subsection~\ref{subsec:auxiliary-restricted-system} introduces an auxiliary
restriction and bounds the number of steps that disappear under this
restriction.
Subsection~\ref{subsec:sharp-upper-bound} combines these ingredients by
induction on row redundancy.

\subsection{The first facial-reduction step and its row-redundancy bound}
\label{subsec:first-step-support-row-redundancy}

Let \(C\in\R^{n\times r}\) have full column rank, and set \(m:=n-r\).
Since \(E_{00}\in\mathcal T(C)\), orthogonality to the affine
constraint set forces the multiplier of the normalization equation
\(Z_{00}=1\) in any exposing matrix to be zero.
Consider the exposing matrix at the first step of an arbitrary
facial-reduction sequence,
\[
    W:=\sum_{i=1}^n\mu_i\Phi(c_i)
    =
    \begin{pmatrix}
        0&-\frac12\mu^TC\\
        -\frac12C^T\mu&C^T\Diag(\mu)C
    \end{pmatrix}
    \succeq0,
\]
where \(W\neq0\).  Since \(W_{00}=0\), positive semidefiniteness forces the
zeroth row and column of \(W\) to vanish.  Hence
\[
    C^T\mu=0,
\]
and therefore
\[
    W=
    \begin{pmatrix}0&0\\0&B\end{pmatrix},
    \qquad
    B:=C^T\Diag(\mu)C\succeq0.
\]
Thus the special structure of \(\Phi(c_i)\) makes the first exposing matrix
simultaneously produce a linear dependence among the rows of \(C\) and a
positive semidefinite matrix \(B\) determining the exposed face.  Set
\[
    \rho:=\rank(W)=\rank(B),\qquad k:=r-\rho.
\]
Let the columns of \(\Gamma\in\R^{r\times k}\) span \(\ker B\).  Set
\[
    D:=C\Gamma
    =
    \begin{pmatrix}d_1^T\\ \vdots\\ d_n^T\end{pmatrix},
    \qquad
    V:=\begin{pmatrix}1&0\\0&\Gamma\end{pmatrix}.
\]
By the characterization of faces of the positive semidefinite cone in
Section~\ref{subsec:facial_reduction_algorithm}, \(W\) exposes the face
associated with \(\spanop\{e_0\}\mathbin{\oplus}\ker B\).  Moreover,
\[
    V^T\Phi(c_i)V=\Phi(d_i),
    \qquad i=1,\ldots,n.
\]
Consequently,
\begin{equation}\label{eq:first-step-face-parametrization}
    \mathcal T(C)
    =
    \left\{
        VZV^T
        \ \middle|\
        Z\in\mathcal T(D)
    \right\}.
\end{equation}
Thus \(\mathcal T(D)\) is precisely the formulation obtained by
parametrizing the face exposed at this first facial-reduction step.

To state the row-redundancy bound, let
\(S:=\{i\in\{1,\ldots,n\}:\mu_i\neq0\}\) be the index set of the nonzero
entries of the multiplier vector \(\mu\).  Set
\(S^c:=\{1,\ldots,n\}\setminus S\).  Let \(D_S\) denote the submatrix of
\(D\) formed by the rows indexed by \(S\).  Define \(p:=|S|\), let
\(\mathbf1\in\R^p\) denote the all-ones
vector, and let \(\mu_S\in\R^p\) denote the subvector of \(\mu\) indexed by
\(S\).  Set
\[
    s:=\rank(D_S),\quad
    h:=\rank\begin{pmatrix}\mathbf1&D_S\end{pmatrix}.
\]
Since \(D_S\) has rank \(s\), exactly \(k-s\) additional independent rows
are needed to span the \(k\)-dimensional row space of \(D\).  Among the
\(|S^c|\) available rows, this leaves
\[
    \eta:=|S^c|-(k-s)
\]
remaining rows after a rank-extending subset has been selected.  We use this
notation in the lemma below and in the proof of
Theorem~\ref{thm:structured-upper}.

\begin{lem}\label{lem:first-stress-support}
Under the first-step notation above,
\begin{equation}\label{eq:first-stress-support-bound}
    p=|S|\geq h+s+\rho.
\end{equation}
Equivalently,
\begin{equation}\label{eq:first-stress-row-excess}
    0\leq \eta\leq m-h.
\end{equation}
In particular, \(h+\eta\leq m\).
\end{lem}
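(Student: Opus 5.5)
The plan is to produce, inside \(\R^p\), two subspaces that are orthogonal to each other: one of dimension \(s+\rho\) and one of dimension \(h\). The inequality \(p\geq h+s+\rho\) then follows by dimension counting. Let \(C_S\) denote the rows of \(C\) indexed by \(S\). Because \(\mu\) vanishes off \(S\), the first-step relations read
\[
    C_S^T\mu_S=0,\qquad B=C_S^T\Diag(\mu_S)C_S .
\]
Since \(B\succeq0\) and \(B\Gamma=0\), we have \(C_S^T\Diag(\mu_S)C_S\Gamma=0\), that is,
\[
    C_S^T\Diag(\mu_S)D_S=0 .
\]
Hence every column of \(\Diag(\mu_S)\begin{pmatrix}\mathbf1&D_S\end{pmatrix}\) lies in \(\range(C_S)^\perp\subseteq\R^p\). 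Every entry of \(\mu_S\) is nonzero by the definition of \(S\), so \(\Diag(\mu_S)\) is invertible. Therefore the span of these columns has dimension exactly \(h\), and
\[
    h\leq p-\rank(C_S).
\]

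The second step is to compute \(\rank(C_S)=s+\rho\); I expect this to be the only point that needs care. If \(C_Sz=0\), then \(Bz=C_S^T\Diag(\mu_S)C_Sz=0\), so \(\ker C_S\subseteq\ker B=\range(\Gamma)\). Because \(\Gamma\) has full column rank, \(\ker(C_S\Gamma)=\Gamma^{-1}(\ker C_S)\) has dimension \(\dim\ker C_S\). This gives
\[
    s=\rank(D_S)=k-\dim\ker C_S .
\]
Rank--nullity then yields
\[
    \rank(C_S)=r-\dim\ker C_S=r-k+s=\rho+s .
\]
Substituting into the previous inequality gives \(p\geq h+s+\rho\), which is \eqref{eq:first-stress-support-bound}.

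For the equivalent form, I will substitute \(|S^c|=n-p\), \(r=\rho+k\) and \(m=n-r\). This gives \(\eta=n-p-k+s\) and \(m-h=n-\rho-k-h\). Hence \(\eta\leq m-h\) if and only if \(p\geq h+s+\rho\).

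For \(\eta\geq0\), I will use that \(D=C\Gamma\) has rank \(k\), since both \(C\) and \(\Gamma\) have full column rank. The rows of \(D_{S^c}\) must therefore raise the rank from \(s\) to \(k\), which requires at least \(k-s\) of them, so \(|S^c|\geq k-s\). The final claim \(h+\eta\leq m\) is a rearrangement of \eqref{eq:first-stress-row-excess}.
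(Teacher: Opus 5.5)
Your proof is correct and is essentially the paper's argument. The paper also shows that \(\range\begin{pmatrix}\mathbf1&D_S\end{pmatrix}\) is orthogonal to \(J\range\begin{pmatrix}D_S&Z_S\end{pmatrix}=J\range(C_S)\) in \(\R^p\), with \(J=\Diag(\mu_S)\), and then counts dimensions; the only difference is how \(\rank(C_S)=s+\rho\) is obtained. The paper uses a complementary basis \(R\) of \(\range(B)\) and the positive definiteness of \(R^TBR\), whereas your inclusion \(\ker C_S\subseteq\ker B=\range(\Gamma)\) together with rank--nullity is shorter and does not even use \(B\succeq0\).
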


\begin{proof}
Choose \(R\in\R^{r\times\rho}\) whose columns span \(\range(B)\).  Since
\(B\) is symmetric, \(\range(B)=(\ker B)^\perp\), so
\(Q=(\Gamma\ R)\) is
invertible.  Set \(Z:=CR\), and let \(Z_S\) denote the submatrix of \(Z\)
consisting of the rows indexed by \(S\).  Then
\[
    CQ=(D\ Z).
\]
Since \(B\succeq0\) and \(\range(R)=\range(B)\),
\(P:=R^TBR\in\mathbb S^\rho\) is positive definite.  Moreover,
\(B\Gamma=0\), so the change of basis \(Q=(\Gamma\ R)\) gives
\[
    Q^TBQ
    =
    \begin{pmatrix}
        \Gamma^TB\Gamma&\Gamma^TBR\\
        R^TB\Gamma&R^TBR
    \end{pmatrix}
    =
    \begin{pmatrix}
        0&0\\
        0&P
    \end{pmatrix}.
\]
Let
\[
    \widetilde D_S:=\begin{pmatrix}\mathbf1&D_S\end{pmatrix}.
\]
Put \(J=\Diag(\mu_S)\), which is nonsingular, and define
\[
    \mathcal E:=\range(\widetilde D_S),
    \qquad
    \mathcal G:=\range\begin{pmatrix}D_S&Z_S\end{pmatrix}.
\]
By definition, \(\dim\mathcal E=h\).

Because \(\mu\) is supported on \(S\) and \(J\mathbf1=\mu_S\), the equation
\(C^T\mu=0\) gives
\begin{equation}\label{eq:first-stress-linear-identities}
    \mathbf1^TJD_S=0,
    \qquad
    \mathbf1^TJZ_S=0.
\end{equation}
Moreover,
\begin{equation}\label{eq:first-stress-block-identities}
    \begin{pmatrix}
        D_S^TJD_S&D_S^TJZ_S\\
        Z_S^TJD_S&Z_S^TJZ_S
    \end{pmatrix}
    =
    (CQ)^T\Diag(\mu)(CQ)
    =
    Q^TBQ
    =
    \begin{pmatrix}
        0&0\\
        0&P
    \end{pmatrix}.
\end{equation}
The \((2,2)\) block of \eqref{eq:first-stress-block-identities} gives
\(Z_S^TJZ_S=P\succ0\), so \(Z_S\) has full column rank \(\rho\).  Moreover,
suppose \(w=D_Sa=Z_Sb\).  The \((1,1)\) and \((2,2)\) blocks of
\eqref{eq:first-stress-block-identities} give, respectively,
\[
    w^TJw=a^TD_S^TJD_Sa=0,
\]
and
\[
    w^TJw=b^TZ_S^TJZ_Sb=b^TPb.
\]
Since \(P\succ0\), we have \(b=0\), and hence \(w=0\).  Consequently,
\[
    \range(D_S)\cap\range(Z_S)=\{0\}.
\]
Since \(\rank(D_S)=s\), it follows that
\[
    \dim\mathcal G=s+\rho.
\]

The identities in \eqref{eq:first-stress-linear-identities}, together with
the \((1,1)\) and \((1,2)\) blocks of
\eqref{eq:first-stress-block-identities}, combine into
\[
    \widetilde D_S^TJ
    \begin{pmatrix}D_S&Z_S\end{pmatrix}
    =0.
\]
Thus, with respect to the ordinary Euclidean inner product on \(\R^p\),
\[
    \mathcal E\perp J\mathcal G.
\]
Since \(J\) is nonsingular, \(\dim(J\mathcal G)=s+\rho\).  The two
orthogonal subspaces must fit in \(\R^p\), and hence
\[
    h+s+\rho
    =\dim\mathcal E+\dim(J\mathcal G)
    \leq p.
\]
This proves \eqref{eq:first-stress-support-bound}.  Equivalently,
\(s+\rho\leq p-h\).

By its definition, \(\eta\geq0\).  Since \(n=m+r=k+\rho+m\),
\[
    \eta=(n-p)-(k-s)=s+\rho+m-p\leq m-h.
\]
\end{proof}

\subsection{The auxiliary restricted system}
\label{subsec:auxiliary-restricted-system}

The proof of the upper bound below uses two successive operations, which have
different roles.  After this first facial-reduction step for \(\mathcal T(C)\),
parametrizing the exposed face with \(V=\Diag(1,\Gamma)\) gives
\[
    C\ \xrightarrow{\;\Gamma\;}\ D=C\Gamma;
\]
this parametrizes the face reached by the facial-reduction step, and the
remaining steps form a facial-reduction sequence for \(\mathcal T(D)\).  To
bound its length, we restrict \(\mathcal T(D)\) to the face associated with
\(\spanop\{e_0\}\mathbin{\oplus}G\), for a chosen subspace
\(G\subseteq\R^k\).  If the columns of \(U\) form an orthonormal basis of
\(G\), then, in this basis, the restricted system is \(\mathcal T(E)\), where
\[
    D\ \xrightarrow{\;U\;}\ E=DU.
\]
This is the restriction construction described in
Subsection~\ref{subsec:restriction-psd-face}, specialized to the structured
system \(\mathcal T(D)\); it is not another facial-reduction step.  This
restriction makes specified rows of \(D\) identical.  The lemma below bounds
the length of a facial-reduction sequence
for \(\mathcal T(D)\) in terms of that of the restricted system
\(\mathcal T(E)\).

\begin{lem}[Auxiliary restriction lemma]\label{lem:auxiliary-restriction}
Let \(D\in\R^{n\times k}\) have full column rank.
Let \(G\subseteq\R^k\) be a subspace, set \(q:=\dim G\), and choose
\(U\in\R^{k\times q}\) with orthonormal columns spanning \(G\).  Define
\[
    E:=DU\in\R^{n\times q}.
\]
Then \(E\) has full column rank, and
\[
    \msd(\mathcal T(D))
    \leq
    \dim(G^\perp)+\msd(\mathcal T(E)).
\]
\end{lem}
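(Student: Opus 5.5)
The plan is to derive the lemma as a direct specialization of Lemma~\ref{lem:msd-restriction-bound}, applied with \(p=k+1\) and \(L\) the affine subspace defining \(\mathcal T(D)\). For the subspace of that lemma we take \(\mathcal G:=\spanop\{e_0\}\oplus G\subseteq\R^{k+1}\). Its orthonormal basis matrix is \(\widetilde U:=\Diag(1,U)\in\R^{(k+1)\times(q+1)}\).

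First, \(E\) has full column rank. If \(Ey=0\), then \(D(Uy)=0\). Since \(D\) has full column rank, this gives \(Uy=0\), and since \(U\) has orthonormal columns, \(y=0\).

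Second, we verify the hypotheses of Lemma~\ref{lem:msd-restriction-bound}. The matrix \(E_{00}\in\mathbb S^{k+1}\) lies in \(L\), because every \(\Phi(d_i)\) has zero \((0,0)\) entry and \(Z_{00}=1\). It also lies in the face \(F_{\mathcal G}\), since its range is \(\spanop\{e_0\}\subseteq\mathcal G\). Hence \(L\cap F_{\mathcal G}\neq\emptyset\).

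Third, we identify the restricted system \(L_{\mathcal G}=\{Z\in\mathbb S^{q+1}:\widetilde UZ\widetilde U^T\in L\}\) with the defining affine subspace of \(\mathcal T(E)\). This is the same computation used for \eqref{eq:first-step-face-parametrization}. A direct block calculation gives \(\widetilde U^T\Phi(d_i)\widetilde U=\Phi(U^Td_i)\). The rows of \(E=DU\) are \(e_i^T=d_i^TU\), so \(U^Td_i=e_i\). Thus
\[
\langle\Phi(d_i),\widetilde UZ\widetilde U^T\rangle=\langle\Phi(e_i),Z\rangle .
\]
Likewise \(\widetilde U^TE_{00}\widetilde U=E_{00}\), so the normalization equation is preserved. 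Therefore \(L_{\mathcal G}\cap\mathbb S^{q+1}_+=\mathcal T(E)\).

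Finally, Lemma~\ref{lem:msd-restriction-bound} yields
\[
\msd(\mathcal T(D))\leq\dim(\mathcal G^\perp)+\msd(\mathcal T(E)).
\]
The orthogonal complement of \(\mathcal G\) in \(\R^{k+1}\) is \(\{0\}\oplus G^\perp\). Hence \(\dim(\mathcal G^\perp)=k-q=\dim(G^\perp)\), which is the claimed bound.

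The only step needing care is the third, the identification \(L_{\mathcal G}=\) the affine hull of \(\mathcal T(E)\). It must be checked at the level of affine subspaces, not only their PSD parts, since \(\msd\) depends on \(L^\perp\). This holds because pulling back each defining equation through \(\widetilde U\) gives exactly the defining equations of \(\mathcal T(E)\). The map \(Z\mapsto \widetilde UZ\widetilde U^T\) is injective and maps the restricted system onto \(L\cap\operatorname{span}F_{\mathcal G}\), so the two systems correspond exactly and their facial-reduction sequences agree.
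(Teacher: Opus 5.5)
Your proposal is correct and follows essentially the same route as the paper. Both apply Lemma~\ref{lem:msd-restriction-bound} with the subspace \(\spanop\{e_0\}\oplus G\) and basis \(\Diag(1,U)\), use \(E_{00}\) for nonemptiness, identify the restricted system with \(\mathcal T(E)\) via \(\Diag(1,U)^T\Phi(d_i)\Diag(1,U)=\Phi(U^Td_i)\), and note that \(\dim(\{0\}\oplus G^\perp)=\dim(G^\perp)\); your explicit identification at the level of affine subspaces, not only feasible sets, is slightly more careful than the paper's one-line appeal to the congruence calculation, though writing the rows of \(E\) as \(e_i^T\) clashes with the unit vectors \(e_0,e_i\), so a name such as \(\bar d_i\) (as in the paper) would be clearer.
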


\begin{proof}
Since \(D\) has full column rank and \(U\) has full column
rank, \(E=DU\) has full column rank. Set \(\widehat U:=\Diag(1,U)\), which has orthonormal columns, and let \(F_G\)
be the face associated with
\(\spanop\{e_0\}\mathbin{\oplus}G=\range(\widehat U)\).
Since \(E_{00}\in\mathcal T(D)\cap F_G\), this intersection is nonempty.
Applying the same
block-diagonal congruence calculation used in deriving
\eqref{eq:first-step-face-parametrization}, with \(U\) in place of \(\Gamma\),
gives
\[
    \mathcal T(D)\cap F_G
    =
    \{\widehat UZ\widehat U^T:Z\in\mathcal T(E)\}.
\]
Since
\((\spanop\{e_0\}\mathbin{\oplus}G)^\perp=\{0\}\mathbin{\oplus}G^\perp\),
the subspace associated with \(F_G\) has orthogonal complement of dimension
\(\dim(G^\perp)\).  Applying
Lemma~\ref{lem:msd-restriction-bound} to the affine system defining
\(\mathcal T(D)\) now gives the claimed inequality.
\end{proof}

\subsection{The sharp upper bound}
\label{subsec:sharp-upper-bound}

Lemmas~\ref{lem:first-stress-support} and
\ref{lem:auxiliary-restriction} provide
the two ingredients for the induction.  The former gives the row-redundancy
bound for the first facial-reduction step, while the latter bounds the number
of steps that disappear under the auxiliary restriction.  We now
combine them to prove the sharp upper bound.

The use of maximum singularity degree in the theorem below deserves a brief
explanation.  Although our final target is \(\sd(\mathcal T(C))\), the
preceding restriction argument must control arbitrary facial-reduction
sequences, not only a shortest one.  By
Lemma~\ref{lem:restrict-fr-sequence}, restricting the feasible set to a face
of the positive semidefinite cone may turn some steps into repetitions, but
the remaining strict steps form a partial facial-reduction sequence for the
restricted problem.  This partial sequence can be extended to a
facial-reduction sequence, so its length
is bounded by maximum singularity degree, whereas it need not be bounded by
singularity degree.  This is why the theorem establishes the stronger bound
on maximum singularity degree.

\begin{thm}\label{thm:structured-upper}
For every full-column-rank \(C\in\R^{n\times r}\),
\[
    \msd(\mathcal T(C))
    \leq
    \min\{n-r,r\}.
\]
\end{thm}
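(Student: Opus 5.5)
The bound by \(r\) is already available: for \(r\geq1\), Proposition~\ref{prop:reduced-order-bound} gives \(\msd(\mathcal T(C))\leq r\), and for \(r=0\) the system is \(\{1\}\). So the task is to prove \(\msd(\mathcal T(C))\leq m:=n-r\). We argue by strong induction on the row redundancy \(m\), simultaneously over all \(n\) and \(r\), with the statement ``\(\msd(\mathcal T(C))\leq\) row redundancy of \(C\)''. If \(\mathcal T(C)\) admits no facial-reduction step the bound is trivial. This covers \(m=0\): then \(C\) is square and invertible, so \(C^T\mu=0\) forces \(\mu=0\), and hence no nonzero exposing matrix exists.

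For the inductive step, fix an arbitrary facial-reduction sequence of length \(d\) for \(\mathcal T(C)\) and analyze its first exposing matrix \(W\) as in Subsection~\ref{subsec:first-step-support-row-redundancy}. By \eqref{eq:first-step-face-parametrization}, the remaining \(d-1\) steps form a facial-reduction sequence for \(\mathcal T(D)\), where \(D=C\Gamma\in\R^{n\times k}\) has full column rank. It therefore suffices to show \(\msd(\mathcal T(D))\leq m-1\). Note that the row redundancy of \(D\) is \(n-k=m+\rho\), which is larger than \(m\), so induction cannot be applied to \(D\) directly; this is why we pass to the auxiliary restriction. Choose \(G\subseteq\R^k\) so that \(E=DU\) has many identical or zero rows, and then discard duplicates. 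Identical rows of \(E\) give identical constraints \(\Phi(e_i)\), and zero rows give trivial constraints, so removing them changes neither \(\mathcal T(E)\) nor its \(\msd\). Concretely, let \(G\) be the subspace on which \(\mathbf1^T\)-type dependencies collapse the rows indexed by \(S\). Take \(G:=\{g\in\R^k: D_Sg\in\spanop\{\mathbf1\}\}\), so that all rows of \(E\) indexed by \(S\) coincide as a single vector \(e\). Its codimension satisfies \(\dim G^\perp= s-(h-1)\) or \(s-h+1\) as appropriate, namely the rank of \(D_S\) modulo \(\mathbf 1\).

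Lemma~\ref{lem:auxiliary-restriction} then gives \(\msd(\mathcal T(D))\leq \dim G^\perp+\msd(\mathcal T(E))\). After deleting duplicate rows, \(E\) becomes a full-column-rank matrix \(E'\) with at most \(|S^c|+1\) rows and \(q=k-\dim G^\perp\) columns. Its row redundancy is at most \(|S^c|+1-q\). Using \(\eta=|S^c|-(k-s)\), this equals \(\eta+1+(k-s)-q\). The target is to verify
\[
    \dim G^\perp+\bigl(\eta+1+(k-s)-q\bigr)\leq m-1 .
\]
Lemma~\ref{lem:first-stress-support} supplies \(\eta\leq m-h\), and \(h\geq1\) whenever \(S\neq\emptyset\). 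Applying the inductive hypothesis to \(E'\) is legitimate only when its row redundancy is strictly less than \(m\). If that fails, we instead use the trivial \(\msd\leq\) column-count bound.

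The main obstacle is the bookkeeping: the choice of \(G\) must make the restriction cost \(\dim G^\perp\) and the residual row redundancy of \(E'\) together fit into \(m-1\). A naive \(G\) loses one unit. If the count above is off by one, the first alternative is \(G:=\{g: D_Sg=0\}\), which gives \(\dim G^\perp=s\) and makes the \(S\)-rows zero, so they are removed entirely. Then \(E'\) has \(|S^c|\) rows and \(k-s\) columns, so its row redundancy is exactly \(\eta\). This yields
\[
    \msd(\mathcal T(D))\leq s+\msd(\mathcal T(E'))\leq s+\eta\leq s+m-h .
\]
This closes only if \(h\geq s+1\), which holds precisely when \(\mathbf1\notin\range(D_S)\). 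In the complementary case \(\mathbf1\in\range(D_S)\) we have \(h=s\), and we need to save \(s\) steps differently. Here we would exploit that the \(\mathbf1\) direction corresponds to the \(e_0\) coordinate: the constraint \(\Phi(d_i)\) with \(d_i^Tg=1\) interacts with \(Z_{00}=1\). The plan is to show that the restriction face then contains a strictly feasible-type point, or that those \(s\) repeated steps are impossible, thereby refining the count in Lemma~\ref{lem:msd-restriction-bound}. I expect this case split to be the crux.
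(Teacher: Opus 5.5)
Your inductive step is incomplete. It closes only when $\mathbf1\notin\range(D_S)$, i.e.\ $h=s+1$. There the choice $G=\{g:D_Sg=0\}$ gives $\msd(\mathcal T(D))\le s+\eta\le s+m-h=m-1$. In the complementary case $\mathbf1\in\range(D_S)$, i.e.\ $h=s$, the same choice only gives $\msd(\mathcal T(D))\le m$, hence $\msd(\mathcal T(C))\le m+1$. You leave this case open with a plan (a strictly feasible point on the restricted face, or ruling out repeated steps in Lemma~\ref{lem:msd-restriction-bound}) that is not yet an argument. Note that you need to save one step there, not $s$. Falling back on the column-count bound does not help either: it gives only $\dim G^\perp+q=k$, which $m$ does not control.

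The missing ingredient is your own first choice of $G$, which works once the count is corrected. The set $G=\{g:D_Sg\in\spanop\{\mathbf1\}\}$ equals $(\spanop\{d_i-d_j:i,j\in S\})^\perp$, the orthogonal complement of the direction space of the affine hull of $\{d_i:i\in S\}$. Its codimension is $\dim G^\perp=\rank(PD_S)$, where $P$ is the orthogonal projector onto $\mathbf1^\perp\subseteq\R^p$. This rank equals $\dim(\range(D_S)+\spanop\{\mathbf1\})-1=h-1$, not $s-h+1$, so $q=k-h+1$. The row count then splits into two cases:
\begin{itemize}
\item If $h=s+1$, this $G$ coincides with $\ker D_S$, so the common $S$-row of $E$ is zero. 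Deleting all $S$-rows leaves $|S^c|=k-s+\eta=q+\eta$ rows. Your bound ``at most $|S^c|+1$ rows'' overcounts in this case, and that is the source of your apparent loss of one unit.
\item If $h=s$, the common $S$-row is nonzero, since otherwise $G\subseteq\ker D_S$ and $\dim G^\perp\ge s$. Keeping one copy leaves $|S^c|+1=k-s+1+\eta=q+\eta$ rows.
\end{itemize}
In both cases the reduced matrix has full column rank and row redundancy exactly $\eta\le m-h<m$. So the induction hypothesis applies directly, with no fallback needed. Lemma~\ref{lem:auxiliary-restriction} gives $\msd(\mathcal T(D))\le(h-1)+\eta$. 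Adding the first step and using Lemma~\ref{lem:first-stress-support} yields $\msd(\mathcal T(C))\le h+\eta\le m$. This is precisely the paper's argument.
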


\begin{proof}
If \(r=0\), then \(C\) has no columns and \(\mathcal T(C)=\{1\}\), so its
maximum singularity degree is zero.  Assume henceforth that \(r\geq1\).

The row redundancy of \(C\) is \(m:=n-r\).
Proposition~\ref{prop:reduced-order-bound} already gives
\(\msd(\mathcal T(C))\leq r\), so it remains to prove
\begin{equation}\label{eq:all-chain-bound}
    \msd(\mathcal T(C))\leq m
\end{equation}
by strong induction on the row redundancy \(m\).

If \(m=0\), then \(C\) is square and nonsingular.  A first exposing
multiplier must satisfy \(C^T\mu=0\), because the exposing matrix has zero
zeroth row.  Thus \(\mu=0\), and there is no facial-reduction step.

Let \(m\geq1\), and assume that the maximum singularity degree is at most the
row redundancy for every full-column-rank matrix whose row redundancy is
smaller than \(m\).  Consider an arbitrary facial-reduction sequence of
length \(d\) for \(\mathcal T(C)\).  If \(d=0\), there is nothing to prove.
Assume \(d\geq1\), and apply the first-step notation introduced before
Lemma~\ref{lem:first-stress-support} to the first step of this sequence.
By \eqref{eq:first-step-face-parametrization}, the remaining \(d-1\) steps
form a facial-reduction sequence for \(\mathcal T(D)\).

Recall that \(D\in\R^{n\times k}\) and
\[
    s=\rank(D_S),
    \qquad
    h=\rank\begin{pmatrix}\mathbf1&D_S\end{pmatrix}.
\]
Thus \(s\) is the dimension of the linear span of \(\{d_i:i\in S\}\), whereas
\(h-1\) is the dimension of their affine hull.  Let \(G\) be the orthogonal
complement of the direction space of the affine
hull of \(\{d_i:i\in S\}\); that is,
\[
    G:=\bigl(\spanop\{d_i-d_j:i,j\in S\}\bigr)^\perp.
\]
Consequently,
\[
    \dim(G^\perp)=h-1,\qquad \dim G=k-h+1.
\]
Apply the auxiliary restriction of
Subsection~\ref{subsec:auxiliary-restricted-system} to the present \(D\) and
\(G\).  Set
\(q:=\dim G=k-h+1\), choose
\(U\in\R^{k\times q}\) with orthonormal columns spanning \(G\), and define
\[
    E:=DU
    \in\R^{n\times q},
    \qquad
    \bar d_i:=U^Td_i,
    \quad i=1,\ldots,n,
\]
so the \(i\)th row of \(E\) is \(\bar d_i^T\).  Since every
\(d_i-d_j\), \(i,j\in S\), belongs to \(G^\perp\),
\[
    \bar d_i-\bar d_j=U^T(d_i-d_j)=0.
\]
Thus the rows of \(E\) indexed by \(S\) have a common value \(a^T\).
Geometrically, the projection onto \(G\) either sends all points \(d_i\),
\(i\in S\), to zero, when their affine hull contains the origin, or sends them
all to the same nonzero point, when their affine hull does not contain the
origin.  This is precisely the dichotomy
\[
    s=h-1\quad\text{and}\quad a=0,
    \qquad\text{or}\qquad
    s=h\quad\text{and}\quad a\neq0.
\]

If \(s=h-1\), then \(a=0\), so the rows of \(E\) indexed by \(S\) are zero,
and we delete them.  If \(s=h\), then \(a\neq0\), so these rows are identical
and nonzero, and we retain one copy.
Denote the resulting matrix by \(\widehat E\).
Because \(\Phi(0)=0\) and repeated rows give identical constraint matrices,
deleting these equations does not change the affine constraint system.  Hence
\(\mathcal T(E)\) and \(\mathcal T(\widehat E)\) have the same exposing
matrices and facial-reduction sequences; in particular,
\[
    \msd(\mathcal T(E))=\msd(\mathcal T(\widehat E)).
\]
The matrix \(E=DU\) has full column rank, and deleting zero rows and repeated
copies does not change its row span.  Hence \(\widehat E\) has full column
rank \(q=\dim G\).  Recall from the definition of \(\eta\) that
\(\lvert S^c\rvert=k-s+\eta\).  Let \(n_E\) denote the number of rows of
\(\widehat E\).  If \(s=h-1\),
\[
    n_E=|S^c|
    =k-s+\eta
    =\dim G+\eta.
\]
If \(s=h\),
\[
    n_E=1+|S^c|
    =1+k-s+\eta
    =\dim G+\eta.
\]
Thus, in either case, the row redundancy of \(\widehat E\) is \(\eta\).
Since \(h\geq1\), Lemma~\ref{lem:first-stress-support} gives
\[
    0\leq\eta\leq m-h<m.
\]
By Lemma~\ref{lem:auxiliary-restriction} and the induction hypothesis applied to
\(\widehat E\), which has row redundancy \(\eta<m\), we obtain
\[
    \msd(\mathcal T(D))
    \leq
    \dim(G^\perp)+\msd(\mathcal T(E))
    =
    (h-1)+\msd(\mathcal T(\widehat E))
    \leq
    h-1+\eta.
\]
Including the first step and using
\eqref{eq:first-stress-row-excess} gives
\[
    \msd(\mathcal T(C))
    \leq
    1+(h-1)+\eta
    =h+\eta
    \leq m.
\]
This proves \eqref{eq:all-chain-bound} and hence the theorem.
\end{proof}

Since the singularity degree is no larger than the maximum singularity degree,
Theorem~\ref{thm:structured-upper} also gives
\[
    \sd(\mathcal T(C))\leq\min\{n-r,r\}.
\]

\begin{cor}\label{cor:global-rlt-bound}
The worst-case singularity degree over all equality-generated SDP--RLT
relaxations \(\mathcal R(A,b)\) in \(n\) binary variables for which
\(P\neq\emptyset\) is
\[
    \max_{A,b:\,P\neq\emptyset}\sd(\mathcal R(A,b))
    =
    \begin{cases}
        1,&n=1,\\[1mm]
        \lfloor n/2\rfloor,&n\geq2.
    \end{cases}
\]
\end{cor}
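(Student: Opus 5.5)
The plan is to prove matching upper and lower bounds, reducing everything to the homogeneous full-row-rank setting and then to the reduced system \(\mathcal T(C)\).

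\textbf{Upper bound.} Fix \(A,b\) with \(P(A,b)\neq\emptyset\).

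First, remove redundant rows so that \(A\in\R^{m\times n}\) has full row rank \(m=\rank(A)\). This changes neither the relaxation nor its singularity degree, as noted in Section~\ref{sec:equality-generated-rlt-relaxation}.

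Next, apply Lemma~\ref{lem:affine-to-homogeneous} to replace \(\mathcal R(A,b)\) by \(\mathcal R(\widehat A,0)\). Since \(\widehat A=AD\) with \(D\) nonsingular, \(\widehat A\) still has rank \(m\). We then split into cases according to \(m\).

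\emph{Case \(m=0\).} \(\mathcal R(0,0)\) is strictly feasible, as shown before Lemma~\ref{lem:equality-face-reduction}. Hence the singularity degree is \(0\).

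\emph{Case \(m=n\).} Then \(r=0\) and \(\ker\widehat A=\{0\}\). The face \(F_A\) of Lemma~\ref{lem:equality-face-reduction} is \(\{\lambda E_{00}:\lambda\geq0\}\), and the feasible set is \(\{E_{00}\}\). So one step with the exposing matrix \(W\) reaches the minimal face, and the singularity degree is exactly \(1\). I would argue this directly rather than through Proposition~\ref{prop:reduced-order-bound}, which assumes \(r\geq1\). The value is \(1\) rather than \(0\) because Slater's condition fails for \(m\geq1\).

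\emph{Case \(0<m<n\).} Here \(r=n-m\geq1\). Proposition~\ref{prop:reduced-order-bound} gives \(\sd(\mathcal R(\widehat A,0))=\max\{1,\sd(\mathcal T(C))\}\). Theorem~\ref{thm:structured-upper}, combined with \eqref{eq:sd-le-msd}, gives \(\sd(\mathcal T(C))\leq\min\{m,r\}\). Since \(\min\{m,r\}\geq1\), we get \(\sd(\mathcal R(A,b))\leq\min\{m,n-m\}\leq\lfloor n/2\rfloor\).

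Collecting the three cases, every relaxation has singularity degree at most \(\max\{1,\lfloor n/2\rfloor\}\).
\begin{itemize}
\item For \(n\geq2\), this maximum equals \(\lfloor n/2\rfloor\), because \(\lfloor n/2\rfloor\geq1\).
\item For \(n=1\), only \(m\in\{0,1\}\) occurs, and both cases give a value at most \(1\).
\end{itemize}

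\textbf{Lower bound (attainment).}
\begin{itemize}
\item For \(n\geq2\), Proposition~\ref{prop:rlt_linear_binary_high_degree} supplies the Vandermonde matrix \(A\) with \(b=0\). Proposition~\ref{prop:rlt_linear_binary_set_singleton} gives \(P=\{0\}\neq\emptyset\), and the relaxation has \(\sd=\lfloor n/2\rfloor\).
\item For \(n=1\), take \(A=(1)\) and \(b=0\). Then \(P=\{0\}\), and by the \(m=n\) case above the singularity degree is exactly \(1\).
\end{itemize}

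The only real subtlety is bookkeeping at the edge cases. The case \(m=n\) falls outside the hypothesis \(r\geq1\) of Proposition~\ref{prop:reduced-order-bound}, so it needs the direct argument given above. One must also check that full row rank and nonemptiness survive the complementation in Lemma~\ref{lem:affine-to-homogeneous}. The heavy lifting has already been done by Theorem~\ref{thm:structured-upper} and the construction in Section~\ref{sec:binary_high_singularity_degree}.
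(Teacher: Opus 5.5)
Your proposal is correct and follows essentially the same route as the paper. You reduce to the homogeneous full-row-rank case, handle \(m=0\) and \(r=0\) directly, and combine Proposition~\ref{prop:reduced-order-bound} with Theorem~\ref{thm:structured-upper} for \(0<m<n\); attainment comes from the Vandermonde construction, with \(A=[1]\), \(b=0\) for \(n=1\). The only cosmetic difference is that you remove redundant rows before complementing and check that the rank survives multiplication by \(D\), whereas the paper complements first.
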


\begin{proof}
By \Cref{lem:affine-to-homogeneous}, it suffices to consider \(b=0\).
After removing redundant rows as in
Subsection~\ref{sec:equality-generated-rlt-relaxation}, assume that \(A\) has
full row rank, and put \(m:=\rank(A)\) and \(r:=n-m\).  If \(m=0\), the
basic arrow system is strictly feasible and has singularity degree zero.  If
\(r=0\), the feasible set is \(\{E_{00}\}\), whose minimal face is the ray
generated by \(E_{00}\), exposed in one step.  For \(n=1\), these
are the only two cases, and \(A=[1]\), \(b=0\) gives a relaxation with
singularity degree one.  Assume henceforth that \(n\geq2\), \(m>0\), and
\(r>0\).  Then
Proposition~\ref{prop:reduced-order-bound},
Theorem~\ref{thm:structured-upper}, and \eqref{eq:sd-le-msd} give
\[
    \sd(\mathcal R(A))
    =
    \max\{1,\sd(\mathcal T(C))\}
    \leq
    \min\{n-r,r\}.
\]
The two boundary cases have singularity degree at most one, while
\[
    \max_{1\leq r\leq n-1}\min\{n-r,r\}
    =\left\lfloor\frac n2\right\rfloor.
\]
The construction in
Section~\ref{sec:binary_high_singularity_degree} attains this value.
\end{proof}

\subsection{Attainment for every parameter pair}

The preceding corollary maximizes over \(r\).  The upper bound in
Theorem~\ref{thm:structured-upper} is, in fact, attained for each fixed pair
\((n,r)\).

\begin{cor}\label{cor:structured-bound-attainment}
For every integer \(n\geq1\) and every \(r\) with \(0\leq r\leq n\), there is
a full-column-rank matrix \(C\in\R^{n\times r}\) such that
\[
    \sd(\mathcal T(C))
    =
    \msd(\mathcal T(C))
    =
    \min\{n-r,r\}.
\]
\end{cor}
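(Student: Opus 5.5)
The plan is to generalize the Vandermonde--Hankel construction of Section~\ref{sec:binary_high_singularity_degree}. The upper bound is already available: for \(r\geq1\), Theorem~\ref{thm:structured-upper} and \eqref{eq:sd-le-msd} give \(\sd(\mathcal T(C))\leq\msd(\mathcal T(C))\leq\min\{n-r,r\}\), and the case \(r=0\) is trivial since \(\mathcal T(C)=\{1\}\). It therefore suffices to exhibit, for each \((n,r)\), a full-column-rank \(C\) with \(\sd(\mathcal T(C))\geq\min\{n-r,r\}=:t\).

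The case \(r=n\) (so \(t=0\)) is covered by any nonsingular \(C\), for example \(C=I_n\). Assume \(1\leq r\leq n-1\). I would take \(C\) block-structured:
\[
    C:=\begin{pmatrix}C_1&0\\0&C_2\end{pmatrix},
\]
where \(C_1\in\R^{(2t)\times t}\) has rows \(c_t(k)^T\), \(k=1,\ldots,2t\), and \(C_2\in\R^{(n-2t)\times(r-t)}\) is the identity padded as needed. Since \(2t\leq n\) and \(t\leq r\), the dimensions fit. If \(t=r\), then \(C_2\) has \(n-2r\) rows and no columns; these zero rows give \(\Phi(0)=0\), so they are harmless. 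If \(t=n-r<r\), then \(C_2\) is the square identity of order \(r-t=n-2t\). In either case \(C\) has full column rank, because \(C_1\) has a nonsingular Vandermonde top block.

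The main step is to show \(\sd(\mathcal T(C))\geq t\) for this \(C\). I would reduce to Lemma~\ref{lem:rlt_reduced_family_degree} by restricting to the face associated with \(\spanop\{e_0\}\oplus\R^t\oplus\{0\}\). On that face the restricted system is \(\mathcal T(C_1)\), padded with zero rows, which is exactly \(\mathcal T_t\) with evaluation points \(1,\ldots,2t\). Lemma~\ref{lem:rlt_arrow_coefficient_span} holds with \(n\) replaced by \(2t\), since its proof needs only \(2t\geq 2s\) distinct nonzero points. Hence \(\sd=\msd=t\) for that restricted system.

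The obstacle is that restriction only bounds \(\msd\) from above; it does not bound \(\sd\) from below. So instead I would analyze exposing matrices of \(\mathcal T(C)\) directly. A first exposing matrix has multiplier \(\mu=(\mu^{(1)},\mu^{(2)})\) with \(C^T\mu=0\). This gives \(C_2^T\mu^{(2)}=0\), and hence \(\mu^{(2)}=0\) whenever \(C_2\) is an identity with no extra rows. Consequently \(W=\Diag(W_1,0)\), where \(W_1\) lies in the span of the \(\Phi_t(k)\), and Lemma~\ref{lem:rlt_arrow_coefficient_span} forces \(W_1\) to be a multiple of \(E_{tt}\). Deleting that coordinate produces the same structure with \(t-1\). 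By induction, exactly as in Lemma~\ref{lem:rlt_reduced_family_degree}, every facial-reduction sequence strips one Vandermonde coordinate per step. The identity block gives strict feasibility of the leftover part: the moment vector \(Z\) with \(x\)-part \(\tfrac12\) and Schur complement \(\tfrac14I\) lies in the relative interior. Thus \(\sd=\msd=t\).

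When \(t=r\), the zero rows carry multipliers that do not affect \(W\), so the argument is unchanged. One can even drop them, since they contribute \(\Phi(0)=0\). Finally, I would check that after the \(t\) steps the terminal face is minimal. This holds because the remaining system \(\mathcal T(C_2)\) with \(Z_{00}=1\) is strictly feasible, which follows from its decoupled identity form \(Z_{ii}=Z_{0i}\).
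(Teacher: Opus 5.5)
Your proposal is correct and follows the paper's proof essentially step for step. For \(n-r<r\) you use the same block-diagonal matrix built from the \(2t\times t\) consecutive-power matrix and \(I_{r-t}\). You make the same observation that \(C^T\mu=0\) kills the multipliers on the identity rows at every step, so Lemma~\ref{lem:rlt_arrow_coefficient_span} forces each step to strip exactly one Vandermonde coordinate, and you finish with the same strict feasibility of the leftover decoupled arrow system. The only difference is cosmetic: for \(n\geq 2r\) you pad \(2r\) Vandermonde rows with \(n-2r\) zero rows (harmless since \(\Phi(0)=0\)), whereas the paper takes all \(n\) evaluation points \(C_{ij}=i^j\) and invokes Lemma~\ref{lem:rlt_reduced_family_degree} directly.
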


\begin{proof}
If \(r=0\), then \(\mathcal T(C)=\{1\}\).  If \(r=n\), take \(C=I_n\); the
resulting basic arrow system is strictly feasible.  Thus both singularity
degrees are zero in either boundary case.  Suppose that \(0<r<n\), and
set \(m:=n-r\).  If \(n\geq2r\), the consecutive-power construction in
Section~\ref{sec:binary_high_singularity_degree} has singularity degree
\(r\).  More explicitly, \(n\geq2r\) implies
\(r\leq\lfloor n/2\rfloor\), and taking \(s=r\) in
Lemma~\ref{lem:rlt_reduced_family_degree} gives the matrix
\(C_{ij}=i^j\), \(i=1,\ldots,n\), \(j=1,\ldots,r\), with singularity
degree \(r\).  Since \(n-r\geq r\), this attains the bound \(\min\{n-r,r\}\).

It remains to consider \(m<r\).  For these fixed \(m\) and \(r\), let
\(C_{\rm pow}\in\R^{2m\times m}\) be the consecutive-power matrix defined in
\eqref{eq:rlt_linear_binary_C}; explicitly,
\((C_{\rm pow})_{ij}=i^j\) for \(i=1,\ldots,2m\) and
\(j=1,\ldots,m\).  Define
\[
    C
    :=
    \begin{pmatrix}
        C_{\rm pow}&0\\
        0&I_{r-m}
    \end{pmatrix}
    \in\R^{n\times r}.
\]
Indeed, \(C\) has
\[
    2m+(r-m)=m+r=n
\]
rows and \(m+(r-m)=r\) columns, and it has full column rank.

The last \(r-m\) rows are the standard unit vectors
\(e_{m+1}^T,\ldots,e_r^T\) in \(\R^r\).  If \(\nu_j\) is the multiplier
associated with the constraint matrix \(\Phi(e_j)\), then the \((0,j)\) entry
of the exposing matrix is
\(-\nu_j/2\), because no other row of \(C\) has a nonzero \(j\)th component.
At the first step, the exposing matrix is
positive semidefinite, and its zero \((0,0)\) entry forces its zeroth row to
vanish; hence \(\nu_j=0\).  The same argument applies at each subsequent
step: the subspace associated with the current face still contains
\(\spanop\{e_0,e_{m+1},\ldots,e_r\}\).  Therefore, the principal submatrix of
the next exposing matrix indexed by \(0,m+1,\ldots,r\) is positive
semidefinite.  Its \((0,0)\) entry is zero, so its zeroth row vanishes and
\(\nu_j=0\) for every \(j=m+1,\ldots,r\).  Thus the appended rows do not
participate in facial reduction, and the first \(m\) coordinates reproduce
the forced \(m\)-step sequence of the consecutive-power construction.  After
these steps, the basic arrow system in the remaining \(r-m\) coordinates is
strictly feasible.
Thus the resulting system has singularity degree \(m=n-r\).  Since
\(m\leq r\), this again attains the bound \(\min\{n-r,r\}\).
In both cases, Theorem~\ref{thm:structured-upper} and the fact that singularity
degree is no larger than maximum singularity degree force maximum singularity
degree to attain the same value.
\end{proof}

For \(0<r<n\), choose a full-row-rank matrix \(A\) satisfying
\(\ker(A)=\range(C)\).  Then \(P(A,0)\neq\emptyset\), and
Proposition~\ref{prop:reduced-order-bound} gives
\(\sd(\mathcal R(A,0))=\min\{n-r,r\}\).  Hence the rank--nullity bound is
attained for every pair with \(0<r<n\).
At the boundary, \(r=n\) gives a strictly feasible basic
arrow system with singularity degree zero, whereas \(r=0\) gives the ray
generated by \(E_{00}\), whose singularity degree is one.

The construction for \(m<r\) is used only to establish sharpness within
the class \(\mathcal T(C)\); unlike the construction in
Section~\ref{sec:binary_high_singularity_degree}, it need not define a
singleton binary set.

\section{Conclusion}

This paper determines the exact worst-case singularity degree of the
equality-generated SDP--RLT relaxations of nonempty binary sets defined by
\(Ax=b\).  If \(\rank(A)=m\) and \(0<m<n\), then the associated relaxation has
singularity degree at most \(\min\{m,n-m\}\), and this bound is attained for
every such rank--nullity pair.  Consequently, the worst-case singularity
degree over all these relaxations is \(1\) for \(n=1\) and
\(\lfloor n/2\rfloor\) for \(n\geq2\).  For \(n\geq2\), the latter value is
attained even when the binary feasible set is a singleton.

Unlike singularity degree itself, the rank and nullity of the equality system
are available directly from the formulation.  The bound therefore certifies in
advance that systems with either few independent equalities or small nullity
have small singularity degree; the largest worst-case degree occurs only when
rank and nullity are nearly balanced.  Through general SDP error-bound theory,
the rank--nullity bound yields a more favorable H\"older exponent in estimates
of the distance to feasibility from constraint residuals.

\appendix

\section{Linear span of the lifted equality-constraint matrices}
\label{app:rlt_equality_constraint_span}

This appendix proves the identity for the linear span of the lifted
equality-constraint matrices used in
Proposition~\ref{prop:reduced-order-bound}.  We first state the underlying
linear-algebra identity in dimensions matching the application and then apply
it to the lifted equations \(Ax=0\) and \(AX=0\).

\begin{prop}\label{prop:constraint-map-range}
Let \(V\in\R^{(n+1)\times(r+1)}\) have full column rank, and let
\(\widetilde A\in\R^{m\times(n+1)}\) have full row rank with
\(\ker(\widetilde A)=\range(V)\).  Define
\[
    \mathcal B:\mathbb S^{n+1}\to\mathbb R^{(n+1)\times m},
    \qquad
    \mathcal B(Y):=Y\widetilde A^T.
\]
Then
\[
    \range(\mathcal B^*)
    =\{N\in\mathbb S^{n+1}\mid V^TNV=0\}.
\]
\end{prop}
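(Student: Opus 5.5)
We compute \(\mathcal B^*\) explicitly and then compare subspaces by a dimension count, after first proving one inclusion directly.

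\textbf{Adjoint.} With the trace inner product on \(\mathbb R^{(n+1)\times m}\), for \(M\in\mathbb R^{(n+1)\times m}\) and \(Y\in\mathbb S^{n+1}\) we have \(\langle M,Y\widetilde A^T\rangle=\operatorname{tr}(M^TY\widetilde A^T)=\langle M\widetilde A,Y\rangle\). Since \(Y\) is symmetric, this gives
\[
    \mathcal B^*(M)=\tfrac12\bigl(M\widetilde A+\widetilde A^TM^T\bigr).
\]

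\textbf{Easy inclusion.} Because \(\widetilde AV=0\), for every \(M\) we get \(V^T\mathcal B^*(M)V=\tfrac12(V^TM\widetilde AV+V^T\widetilde A^TM^TV)=0\). Hence \(\range(\mathcal B^*)\subseteq\{N:V^TNV=0\}\).

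\textbf{Reverse inclusion.} The cleanest route is to show that the orthogonal complements agree, namely
\[
    \ker\mathcal B=\{N:V^TNV=0\}^\perp .
\]
The right-hand side is the image of the map \(R\mapsto VRV^T\) on \(\mathbb S^{r+1}\), since \(\langle N,VRV^T\rangle=\langle V^TNV,R\rangle\). So the claim reduces to
\[
    \ker\mathcal B=\{VRV^T: R\in\mathbb S^{r+1}\}.
\]
\begin{itemize}
\item If \(Y=VRV^T\), then \(Y\widetilde A^T=VR(\widetilde AV)^T=0\).
\item Conversely, suppose \(Y\widetilde A^T=0\). Then every column of \(Y\) lies in \(\ker\widetilde A=\range(V)\), so \(Y=VK\) for some \(K\). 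By symmetry \(Y=Y^T\), so \(\range(Y^T)\subseteq\range(V)\) as well, and the rows of \(Y\) also lie in \(\range(V)^T\).
\item Let \(V^+:=(V^TV)^{-1}V^T\), which exists because \(V\) has full column rank. Then \(VV^+\) is the orthogonal projector onto \(\range(V)\). Since both the columns and the rows of \(Y\) lie in this range, \(Y=VV^+Y(V^+)^TV^T\).
\item Setting \(R:=V^+Y(V^+)^T\), which is symmetric, gives \(Y=VRV^T\).
\end{itemize}

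\textbf{Conclusion.} Taking orthogonal complements in the finite-dimensional space \(\mathbb S^{n+1}\) and using \(\range(\mathcal B^*)=(\ker\mathcal B)^\perp\), we obtain
\[
    \range(\mathcal B^*)=\{VRV^T\}^\perp=\{N:V^TNV=0\},
\]
which is the stated identity.

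The only delicate point is writing a symmetric \(Y\) with \(Y\widetilde A^T=0\) in the form \(VRV^T\), which requires using both the row and the column conditions. The projector argument above handles this. The full row rank hypothesis on \(\widetilde A\) is not actually needed; only \(\ker\widetilde A=\range(V)\) is used.
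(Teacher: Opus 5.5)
Your proof is correct and follows essentially the same route as the paper. Both arguments identify \(\ker\mathcal B=\{VRV^T: R\in\mathbb S^{r+1}\}\) from \(\ker\widetilde A=\range(V)\) and the symmetry of \(Y\), and then pass to orthogonal complements via \(\range(\mathcal B^*)=(\ker\mathcal B)^\perp\). Your projector argument makes explicit the step from \(\range(Y)\subseteq\range(V)\) to \(Y=VRV^T\), which the paper leaves to the face characterization. The explicit adjoint and the separate ``easy inclusion'' are correct but redundant.
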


\begin{proof}
Since \(\range(\widetilde A^T)=\ker(\widetilde A)^\perp
=\range(V)^\perp\) and \(Y\) is symmetric,
\[
    \ker(\mathcal B)
    =\{VZV^T\mid Z\in\mathbb S^{r+1}\}.
\]
Indeed, \(Y\widetilde A^T=0\) is equivalent to
\(\range(V)^\perp\subseteq\ker(Y)\), and hence to
\(\range(Y)\subseteq\range(V)\).  Therefore
\[
    \range(\mathcal B^*)
    =\ker(\mathcal B)^\perp
    =\{N\in\mathbb S^{n+1}\mid V^TNV=0\},
\]
as claimed.
\end{proof}

\begin{cor}\label{cor:lifted-equality-constraint-span}
Let \(A\in\R^{m\times n}\) have full row rank, let
\(C\in\R^{n\times r}\) have full column rank with
\(\ker(A)=\range(C)\), and set
\[
    V:=\begin{pmatrix}1&0\\0&C\end{pmatrix}.
\]
If \(L_A\) is the linear span of the constraint matrices for the lifted
equations \(Ax=0\) and \(AX=0\), then
\[
    L_A
    =\{N\in\mathbb S^{n+1}\mid V^TNV=0\}.
\]
\end{cor}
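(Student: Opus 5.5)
The plan is to obtain the corollary directly from Proposition~\ref{prop:constraint-map-range} by choosing \(\widetilde A:=(0\ \ A)\in\R^{m\times(n+1)}\), exactly as in \eqref{eq:equality-face-matrices}.

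\emph{Step 1: check the hypotheses.} Since \(A\) has full row rank, \(\widetilde A\) has full row rank. Since \(C\) has full column rank, so does \(V=\Diag(1,C)\). For the kernel identity, a vector \((y_0,y)\) lies in \(\ker\widetilde A\) exactly when \(Ay=0\). Using \(\ker A=\range(C)\), this means \(y=Cz\) for some \(z\), with \(y_0\) arbitrary, so \(\ker\widetilde A=\range(V)\).

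\emph{Step 2: identify \(L_A\) with \(\range(\mathcal B^*)\).} As noted in the proof of Lemma~\ref{lem:equality-face-reduction}, the equations \(Ax=0\) and \(AX=0\) together say \(Y\widetilde A^T=0\).

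Row \(0\) of \(Y\widetilde A^T\) is \((Ax)^T\). Row \(j\ge1\) is the \(j\)th row of \(XA^T\), that is, \((AX)_{\cdot j}^T\) after using symmetry of \(X\). So the scalar equations in question are exactly the entries \((Y\widetilde A^T)_{ji}=\langle M_{ji},Y\rangle\), where
\[
M_{ji}:=\tfrac12\bigl(e_j\tilde a_i^T+\tilde a_ie_j^T\bigr)
\]
and \(\tilde a_i\) is the \(i\)th row of \(\widetilde A\) written as a column vector.

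There is one bookkeeping point. The entries with \(j=0\) encode \(Ax=0\), and in the original formulation these can also be written through the \(x\)-part. Either representation gives the same functional \(Y\mapsto\langle M_{0i},Y\rangle\), because \(Y_{0k}=Y_{k0}\) and we use the symmetric coefficient matrix. Hence the span of the constraint matrices equals the span of \(\{M_{ji}\}\). That span is precisely \(\range(\mathcal B^*)\), since
\[
\mathcal B^*(G)=\tfrac12\bigl(G\widetilde A+\widetilde A^TG^T\bigr)=\sum_{j,i}G_{ji}M_{ji}.
\]

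\emph{Step 3: conclude.} Proposition~\ref{prop:constraint-map-range} now gives \(L_A=\{N:V^TNV=0\}\).

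The only mildly delicate point is Step 2. There we must make sure that "the constraint matrices" are taken as the symmetric coefficient matrices, so that the redundancy between the row-\(0\) equations and \(Ax=0\) does not change the span. Everything else is routine.
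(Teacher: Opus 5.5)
Your proposal is correct and follows the paper's proof. The paper also sets $\widetilde A=(0\ \ A)$, observes that the lifted equations are $Y\widetilde A^T=0$ so their coefficient matrices span $\range(\mathcal B^*)$, checks $\ker\widetilde A=\R\oplus\range(C)=\range(V)$, and invokes Proposition~\ref{prop:constraint-map-range}; your Step~2 merely spells out the adjoint computation $\mathcal B^*(G)=\tfrac12\bigl(G\widetilde A+\widetilde A^TG^T\bigr)$, which the paper leaves implicit.
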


\begin{proof}
Set \(\widetilde A:=\begin{pmatrix}0&A\end{pmatrix}\).  The lifted equations
are precisely \(Y\widetilde A^T=0\), so their coefficient matrices span the
range of the adjoint of \(Y\mapsto Y\widetilde A^T\).  Moreover,
\[
    \ker(\widetilde A)
    =\mathbb R\oplus\ker(A)
    =\mathbb R\oplus\range(C)
    =\range(V).
\]
The result now follows from \Cref{prop:constraint-map-range}.
\end{proof}

\bibliographystyle{siam}
\bibliography{mybib}

\begin{thebibliography}{10}

\bibitem{borwein1981regularizing}
{\sc J.~Borwein and H.~Wolkowicz}, {\em Regularizing the abstract convex
  program}, Journal of Mathematical Analysis and Applications, 83 (1981),
  pp.~495--530.

\bibitem{borwein1981facial}
{\sc J.~M. Borwein and H.~Wolkowicz}, {\em Facial reduction for a cone-convex
  programming problem}, Journal of the Australian Mathematical Society, 30
  (1981), pp.~369--380.

\bibitem{drusvyatskiy2017many}
{\sc D.~Drusvyatskiy and H.~Wolkowicz}, {\em The many faces of degeneracy in
  conic optimization}, Foundations and Trends{\textregistered} in Optimization,
  3 (2017), pp.~77--170.

\bibitem{hou2026lowrankalm}
{\sc D.~{Hou}, T.~{Tang}, and K.~{Toh}}, {\em {A Low-Rank Augmented Lagrangian
  Method for Polyhedral-SDP and Moment-SOS Relaxations of Polynomial
  Optimization}}, Mathematical Programming,  (2026).

\bibitem{hu2024maximum}
{\sc H.~Hu}, {\em The maximum singularity degree for linear and semidefinite
  programming}, Preprint arXiv:2402.11795,  (2024).

\bibitem{hu2026shorsingularity}
\leavevmode\vrule height 2pt depth -1.6pt width 23pt, {\em {On the Singularity
  Degree of Shor Relaxations for $0$--$1$ Programs}}.
\newblock {arXiv:2607.12476 [math.OC]}, {2026}.

\bibitem{lourencco2018facial}
{\sc B.~F. Louren{\c{c}}o, M.~Muramatsu, and T.~Tsuchiya}, {\em Facial
  reduction and partial polyhedrality}, SIAM Journal on Optimization, 28
  (2018), pp.~2304--2326.

\bibitem{pataki2013strong}
{\sc G.~Pataki}, {\em Strong duality in conic linear programming: facial
  reduction and extended duals}, Proceedings of Jonfest: A conference in honour
  of the 60th birthday of Jon Borwein,  (2013), pp.~613--634.

\bibitem{sherali1990hierarchy}
{\sc H.~D. Sherali and W.~P. Adams}, {\em A hierarchy of relaxations between
  the continuous and convex hull relaxations for 0--1 programming}, SIAM
  Journal on Discrete Mathematics, 3 (1990), pp.~411--430.

\bibitem{sherali2013reformulation}
\leavevmode\vrule height 2pt depth -1.6pt width 23pt, {\em A
  Reformulation-Linearization Technique for Solving Discrete and Continuous
  Nonconvex Problems}, Springer Science \& Business Media, 2013.

\bibitem{sremac2017complete}
{\sc S.~Sremac, H.~Woerdeman, and H.~Wolkowicz}, {\em Complete facial reduction
  in one step for spectrahedra}, arXiv preprint arXiv:1710.07410,  (2017).

\bibitem{sremac2021error}
{\sc S.~Sremac, H.~J. Woerdeman, and H.~Wolkowicz}, {\em Error bounds and
  singularity degree in semidefinite programming}, SIAM Journal on
  Optimization, 31 (2021), pp.~812--836.

\bibitem{sturm2000error}
{\sc J.~F. Sturm}, {\em Error bounds for linear matrix inequalities}, SIAM
  Journal on Optimization, 10 (2000), pp.~1228--1248.

\bibitem{tanigawa2017singularity}
{\sc S.-i. Tanigawa}, {\em Singularity degree of the positive semidefinite
  matrix completion problem}, SIAM Journal on Optimization, 27 (2017),
  pp.~986--1009.

\bibitem{tuncel2001slater}
{\sc L.~Tun{\c{c}}el}, {\em On the {S}later condition for the {SDP} relaxations
  of nonconvex sets}, Operations Research Letters, 29 (2001), pp.~181--186.

\bibitem{waki2013facial}
{\sc H.~Waki and M.~Muramatsu}, {\em Facial reduction algorithms for conic
  optimization problems}, Journal of Optimization Theory and Applications, 158
  (2013), pp.~188--215.

\bibitem{yildirim2026relaxations}
{\sc E.~{Y{\i}ld{\i}r{\i}m}}, {\em {Relaxations of KKT Conditions Do Not
  Strengthen Finite RLT and SDP-RLT Bounds for Nonconvex Quadratic Programs}},
  Journal of Global Optimization, 94 (2026), pp.~891--918.

\bibitem{ziegler2000zeroone}
{\sc G.~M. Ziegler}, {\em Lectures on 0/1-polytopes}, in
  Polytopes---Combinatorics and Computation, vol.~29 of DMV Seminar,
  Birkh{\"a}user, Basel, 2000, pp.~1--41.

\end{thebibliography}
\addcontentsline{toc}{section}{Bibliography}

\end{document}